\documentclass[a4paper,numbook,babel,final,envcountsame,11pt]{article}

\usepackage{amssymb}
\usepackage{amsthm}
\usepackage{amsmath}
\usepackage{graphicx}
\usepackage{array,hhline}



\newtheorem{lemma}{Lemma}[section]
\newtheorem{theorem}[lemma]{Theorem}
\newtheorem{proposition}[lemma]{Proposition}
\newtheorem{conjecture}[lemma]{Conjecture}
\newtheorem{corollary}[lemma]{Corollary}
\theoremstyle{definition}
\newtheorem{definition}[lemma]{Definition}
\newtheorem{remark}[lemma]{Remark}

\numberwithin{equation}{section}
\numberwithin{figure}{section}

\renewcommand{\theenumi}{\roman{enumi}}

\newcommand{\Cset}{\mathcal{C}}

\newcommand{\Lset}{\mathcal{L}}
\newcommand{\Nset}{\mathcal{N}}

\newcommand{\Xset}{\mathcal{X}}


\begin{document}

\title{\huge On a correction of a property of $GC_n$ sets}         
\author{Hakop Hakopian, Vahagn Vardanyan}        
\date{}          

\maketitle

\begin{abstract}
An $n$-poised node set $\mathcal X$ in the plane is called  $GC_n$ set if the (bivariate) fundamental polynomial of each node is a product of n linear factors. A line is called
$k$-node line if it passes through
exactly $k$-nodes of $\mathcal X.$ An $(n+1)$-node line is called maximal line. The well-known conjecture of M. Gasca and J. I. Maeztu states that every $GC_n$ set has a maximal line. Untill now the conjecture has been
proved only for the cases $n \le 5.$ We say  that a node uses a line if the line is a factor in the node's fundamental polynomial. It is a simple and well-known fact that any
maximal line $M$ is used by all $\binom{n+1}{2}$ nodes in $\mathcal
X\setminus M.$
Here we consider the main result of the paper - V.~Bayramyan, H.~Hakopian,
On a new property of n-poised and $GC_n$ sets, Adv Comput Math,  {\bf․ 43}, (2017) 607-626,
stating that any $n$-node line of $GC_n$ set
is used either by exactly $\binom{n}{2}$ nodes or by exactly
$\binom{n-1}{2}$ nodes, provided that the Gasca-Maeztu conjecture is true.

In this paper we show that this result is not correct in the case $n=3.$ Namely, we bring an example of a $GC_3$ set and a $3$-node line there which is not used at all. Fortunately, then we were able to establish that this is the only possible counterexample, i.e., the above mentioned result
is true for  all $n\ge 1, n\neq 3.$

We also characterize the exclusive case $n=3$ and present some new results on the maximal lines and the usage of $n$-node lines in $GC_n$ sets.
\end{abstract}
{\bf Key words:} Polynomial interpolation, Gasca-Maeztu conjecture,
$n$-poised set, $n$-independent set, $GC_n$ set, fundamental
polynomial, maximal line.

{\bf Mathematics Subject Classification (2010):} \\
primary: 41A05, 41A63; secondary 14H50.

\section{Introduction\label{sec:intro}}
An $n$-poised set $\mathcal X$ in the plane is a node set for which the interpolation problem with bivariate polynomials of total degree not exceeding $n$ is unisolvent.
Node sets with geometric characterization: $GC_n$ sets, introduced by Chang and Yao \cite{CY77}, form an important subclass of $n$-poised set. In a $GC_n$ set the fundamental polynomial of each node is a product of n linear factors. We say  that a node uses a line if the line is a factor in the node's fundamental polynomial. Thus in a $GC_n$ set each node uses exactly $n$ lines. A line is called $k$-node line if it passes through
exactly $k$-nodes of $\mathcal X.$ It follows from the uniqueness of fundamental polynomials that any used line passes through at least two nodes. On the other hand at most $n+1$ nodes can be collinear in $GC_n$ sets and  $(n+1)$-node line is called maximal line \cite{dB}. The well-known conjecture of M. Gasca and J. I. Maeztu states that every $GC_n$ set has a maximal line. Untill now the conjecture has been
proved only for the cases $n \le 5.$  It is a simple and well-known fact that any
maximal line $M$ in $\Xset$ is used by all $\binom{n+1}{2}$ nodes in $\mathcal
X\setminus M.$ It immediately follows from the fact that if a polynomial of total degree at most $n$ vanishes at $n+1$ points of a line then the line divides the polynomial (see Proposition \ref{prp:n+1points}).
Here we consider the main result of the paper \cite{BH} by V.~Bayramyan and H.~H.,
stating that any $n$-node line of $GC_n$ set
is used either by exactly $\binom{n}{2}$ nodes or by exactly
$\binom{n-1}{2}$ nodes, provided that the Gasca-Maeztu conjecture is true.

In this paper we show that this result is not correct in the case $n=3.$ Namely, in Subsection \ref{count} we bring an example of a $GC_3$ set and a $3$-node line there which is not used at all. Fortunately, then we were able to establish that this is the only possible counterexample, i.e., the above mentioned result
is true for  all $n\ge 1, n\neq 3.$ (see the forthcoming Theorem \ref{thm:corrected}).

We also characterize the exclusive case $n=3$ (Proposition \ref{prp:n=3}) and present some new results on the maximal lines and the usage of $n$-node lines in $GC_n$ sets.
Let  $\ell$ be an $n$-node line in a $GC_n$ set $\Xset,$ where $n \geq 4.$
Namely, we prove that if there are $n$ maximal lines passing through $n$ distinct
nodes in $\ell$ then there is at least one more maximal line in $\Xset$ (Proposition \ref{prp:nmaximals}).
We also prove that  if $\Xset$ has exactly three maximal lines then there are exactly three $n$-node lines and each is used by exactly $\binom{n}{2}$ nodes (Corollary \ref{lastcrl}).

Now let us go to the exact definitions and formulations.
Denote by $\Pi_n$ the space of bivariate polynomials of total degree
at most $n:$
\begin{equation*}
\Pi_n=\left\{\sum_{i+j\leq{n}}c_{ij}x^iy^j
\right\}.
\end{equation*}
We have that
\begin{equation} \label{N=}
N:=\dim \Pi_n=(n+2)(n+1)/2.
\end{equation}
Let $\Xset$ be a set of $s$ distinct nodes (points):
\begin{equation*}
{\mathcal X}={\mathcal X}_s=\{ (x_1, y_1), (x_2, y_2), \dots , (x_s, y_s) \} .
\end{equation*}
The Lagrange interpolation problem is: for given set of values
$$\Cset_s:=\{ c_1, c_2, \dots , c_s \}$$ find a polynomial $p \in \Pi_n$ satisfying
the conditions
\begin{equation}\label{int cond}
p(x_i, y_i) = c_i, \ \ \quad i = 1, 2, \dots s.
\end{equation}

\subsection{Poised sets}

\begin{definition}
A set of nodes ${\mathcal X}_s$ is called \emph{$n$-poised} if
for any set of values $\Cset_s$ there exists a unique polynomial
$p \in \Pi_n$ satisfying the conditions \eqref{int cond}.
\end{definition}
It is a well-known fact that if a node set $\Xset_s$ is $n$-poised then $s=N.$ Thus from now on we will consider sets $\Xset=\Xset_N$ when
$n$-poisedness is studied.

A polynomial $p \in \Pi_n$ is called an \emph{$n$-fundamental
polynomial} for a node $ A = (x_k, y_k) \in {\mathcal X}_s,$ where $1\le k\le s$,  if
\begin{equation*}
p(x_i, y_i) = \delta _{i k}, \  i = 1, \dots , s ,
\end{equation*}
where $\delta$ is the Kronecker symbol.

Let us denote the
$n$-fundamental polynomial of the node $A \in{\mathcal X}_s$ by $p_A^\star=p_{A, {\mathcal X}}^\star.$  A
polynomial vanishing at all nodes but one is also called fundamental, since it is a nonzero
constant times the fundamental polynomial.

Next proposition is a basic  Linear Algebra fact:

\begin{proposition} \label{prp:poised}
The set of nodes ${\mathcal X}_N$ is $n$-poised if and only if for each node $A\in\Xset_s$ there is
an $n$-fundamental polynomial $p_A^\star.$
\end{proposition}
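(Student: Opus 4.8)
The plan is to translate the whole statement into a fact about a single linear map and then invoke the dimension count $\dim\Pi_n=N$ from \eqref{N=}. Fix the nodes of $\Xset_N$ and consider the evaluation operator
\[
L\colon \Pi_n\longrightarrow \mathbb R^{N},\qquad L(p)=\bigl(p(x_1,y_1),\dots,p(x_N,y_N)\bigr),
\]
which is linear. By the definition of $n$-poisedness, $\Xset_N$ is $n$-poised precisely when $L$ is a bijection: surjectivity of $L$ is the existence part of \eqref{int cond}, and injectivity of $L$ is the uniqueness part. On the other hand, an $n$-fundamental polynomial for the node $A=(x_k,y_k)$ is exactly a preimage under $L$ of the $k$-th standard basis vector $e_k\in\mathbb R^{N}$. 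Hence the assertion reduces to: $L$ is bijective if and only if $e_1,\dots,e_N$ all lie in the range of $L$.

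The forward implication is immediate, since a bijective $L$ is in particular surjective. For the converse I would start from polynomials $p_1,\dots,p_N\in\Pi_n$ with $L(p_k)=e_k$ and first check that they are linearly independent: if $\sum_k\alpha_k p_k=0$, applying $L$ gives $\sum_k\alpha_k e_k=0$ in $\mathbb R^{N}$, so all $\alpha_k=0$. Since $\dim\Pi_n=N$ by \eqref{N=}, the $p_k$ therefore form a basis of $\Pi_n$.

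Given arbitrary data $\Cset_N=\{c_1,\dots,c_N\}$, the polynomial $p=\sum_k c_k p_k$ satisfies $p(x_j,y_j)=\sum_k c_k\delta_{kj}=c_j$, so a solution of \eqref{int cond} exists; and if $q=\sum_k\beta_k p_k$ is any element of $\Pi_n$ interpolating the same data, evaluating at the nodes forces $\beta_j=c_j$, so $q=p$ and the interpolant is unique. Thus $\Xset_N$ is $n$-poised. Equivalently, one may simply observe that $L$ is a surjection between spaces of the same finite dimension $N$ and conclude bijectivity from the rank--nullity theorem.

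I expect no genuine obstacle here, as this is the routine linear-algebra underpinning of the theory; the only point that is actually used is the equality $\dim\Pi_n=N$, which is precisely what makes ``surjective'' equivalent to ``bijective'' for $L$ and, in turn, what forces an $n$-poised node set to consist of exactly $N$ points.
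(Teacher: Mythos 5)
Your argument is correct: the paper states Proposition \ref{prp:poised} without proof as ``a basic Linear Algebra fact,'' and your proof via the evaluation operator $L\colon\Pi_n\to\mathbb R^N$, linear independence of the fundamental polynomials, and the dimension count $\dim\Pi_n=N$ is exactly the standard argument being alluded to. No gaps.
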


\subsection{Independent sets}

\begin{definition}
A set of nodes ${\mathcal X}$ is called \emph{$n$-independent} if
each its node has an $n$-fundamental polynomial. Otherwise, ${\mathcal
X}$ is called \emph{$n$-dependent.}
\end{definition}
\noindent Clearly fundamental polynomials are linearly independent.
Therefore a necessary condition of $n$-independence is $|{\mathcal
X}| \le N.$

By using the Lagrange formula:
\begin{equation*}
p = \sum_{A\in{\mathcal X}} c_{A} p_{{A}, {\mathcal X}}^\star
\end{equation*}
for $n$-independent node set ${\mathcal X}$
we get a polynomial $p \in \Pi_n$ satisfying the
interpolation conditions \eqref{int cond}.

This implies that
a node set ${\mathcal X}={\mathcal X}_s$ is $n$-independent if and only
if the interpolation problem \eqref{int cond} is
\emph{$n$-solvable,} meaning that for any data $\mathcal C_s$ there exists a (not necessarily unique) polynomial $p \in \Pi_n$
satisfying the conditions \eqref{int cond}.

In the sequel we will use the following result on $n$-independence:

\begin{theorem}[Eisenbud, Green, Harris, \cite{EGH}]\label{thm:2n+1}
A node set consisting of at most $2n+1$-nodes is
$n$-dependent if and only if $n+2$ its nodes are collinear.
\end{theorem}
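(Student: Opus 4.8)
The plan is to prove the two implications separately: the ``if'' direction directly, and the substantive ``only if'' direction (phrased contrapositively as: non-collinearity forces independence) by induction on $n$ via a line-peeling reduction. For the easy direction, suppose $n+2$ nodes of the set $\Xset$ lie on a line $\ell$ and let $A$ be one of them. Any $p\in\Pi_n$ vanishing at the other $n+1$ of these nodes vanishes at $n+1$ points of $\ell$, hence is divisible by the linear form defining $\ell$ (Proposition~\ref{prp:n+1points}), so it vanishes on all of $\ell$ and in particular at $A$. Thus $A$ has no $n$-fundamental polynomial and $\Xset$ is $n$-dependent; note that no bound on $|\Xset|$ is used here.

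For the main direction we show: if $|\Xset|\le 2n+1$ and no $n+2$ nodes of $\Xset$ are collinear, then every $A\in\Xset$ has an $n$-fundamental polynomial, by induction on $n$. The case $n=0$ is trivial, and if $\Xset$ lies on a single line it has at most $n+1$ collinear nodes, each of which has an $n$-fundamental polynomial built as a product of at most $n$ lines transverse to that line (equivalently, univariate Lagrange interpolation along it); so assume $\Xset$ is not collinear. Fix $A\in\Xset$. The heart of the argument is to exhibit a line $\ell$ such that (i)~$A\notin\ell$; (ii)~$|\Xset\setminus\ell|\le 2(n-1)+1$; and (iii)~$\Xset\setminus\ell$ has no $(n-1)+2=n+1$ collinear nodes. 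Granting such an $\ell$, set $\Xset':=\Xset\setminus\ell$; it still contains $A$, so the induction hypothesis yields $q:=p_{A,\Xset'}^\star\in\Pi_{n-1}$, whence the product of $q$ with the linear form defining $\ell$ belongs to $\Pi_n$, vanishes on $(\ell\cap\Xset)\cup(\Xset'\setminus\{A\})=\Xset\setminus\{A\}$, and is nonzero at $A$; that product is the desired $n$-fundamental polynomial of $A$ in $\Xset$.

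Producing $\ell$ is where the case analysis lies. If $\Xset$ contains no $n+1$ collinear nodes, then (iii) holds automatically for every line, and since $\Xset$ is not collinear one of the following occurs: either $\Xset\setminus\{A\}$ is contained in a line $\ell_0$ (necessarily not through $A$), and we take $\ell:=\ell_0$, so $|\Xset'|=1$; or $\Xset\setminus\{A\}$ contains three non-collinear nodes $B,C,D$, in which case at most one of the lines $BC$, $BD$, $CD$ passes through $A$, and any of the other two works as $\ell$, since it carries at least two nodes and hence $|\Xset'|\le 2n-1$. If instead some line $m$ carries exactly $n+1$ nodes: when $A\notin m$ we take $\ell:=m$, so $|\Xset'|\le n$ and (ii), (iii) are immediate; when $A\in m$, the bound $|\Xset|\le 2n+1$ forces at least one node off $m$ (otherwise $\Xset\subseteq m$, already handled), and a line through one node of $(m\cap\Xset)\setminus\{A\}$ and one node off $m$ misses $A$, meets $m$ in a single node, and removes at least two nodes in all, giving (i) and (ii).

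The one genuinely delicate point---the step I expect to require the most care---is condition (iii) in this last subcase when $\Xset$ also possesses a \emph{second} $(n+1)$-node line $m'$. A short computation shows this can happen only when $|\Xset|=2n+1$, the intersection $m\cap m'\cap\Xset$ is a single node $P_0$, and $\Xset\subseteq m\cup m'$; then if $A\ne P_0$ one has $A\notin m'$ and may take $\ell:=m'$, while if $A=P_0$ one routes $\ell$ through a node of $(m\cap\Xset)\setminus\{A\}$ and a node of $(m'\cap\Xset)\setminus\{A\}$, which trims both $m$ and $m'$ down to $n$ nodes while discarding exactly two nodes. Checking in each configuration that $\Xset'$ then satisfies (i)--(iii) is a routine finite verification, which closes the induction.
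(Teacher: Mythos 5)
Your argument is correct. Note, however, that the paper itself offers no proof of Theorem~\ref{thm:2n+1}: it is quoted as a known result of Eisenbud--Green--Harris, with the reader directed to \cite{V} for ``a simple and short proof.'' So there is no in-paper argument to compare against; what you have produced is a self-contained elementary proof, and it is of the same line-peeling flavour as the short proofs in the literature. Both directions check out: the easy implication is exactly the standard application of Proposition~\ref{prp:n+1points}, and your contrapositive induction is sound. In particular, the one delicate configuration you flag is handled correctly: two distinct $(n+1)$-node lines $m,m'$ force $|(m\cup m')\cap\Xset|=2(n+1)-|m\cap m'\cap\Xset|\le 2n+1$, hence $|\Xset|=2n+1$, $\Xset\subseteq m\cup m'$ and $m\cap m'\cap\Xset=\{P_0\}$; a third such line would meet $m\cup m'$ in at most two points, so there are at most two of them, and in each of your subcases ($A\ne P_0$, $A=P_0$) the chosen $\ell$ avoids $A$, removes at least two nodes, and trims every $(n+1)$-node line down to $n$ nodes, so conditions (i)--(iii) and hence the inductive hypothesis for $\Xset\setminus\ell$ are met. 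The only stylistic remark is that you leave that last verification as ``routine''; since it is the crux of the induction, it deserves to be written out as above, but there is no gap.
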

For a simple and short proof of this result see \cite{V}.
\subsection{Maximal lines}

\begin{definition}
Given an $n$-poised set $ {\mathcal X}.$ We say that a node
$A\in{\mathcal X}$ \emph{uses a line $\ell\in \Pi_1$,} if
\begin{equation*}
  p_A^\star = \ell q, \ \text{where} \ q\in\Pi_{n-1}.
\end{equation*}
\end{definition}
The following proposition is well-known (see, e.g., \cite{HJZ09b}
Proposition 1.3):
\begin{proposition}\label{prp:n+1points}
Suppose that a polynomial $p \in
\Pi_n$ vanishes at $n+1$ points of a line $\ell.$ Then we have that
\begin{equation*}
p = \ell r, \ \text{where} \ r\in\Pi_{n-1}.
\end{equation*}
\end{proposition}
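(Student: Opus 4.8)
The plan is to reduce the statement to the univariate case by an affine change of coordinates. First I would record the auxiliary observation that every invertible affine map $T$ of the plane induces, via $p\mapsto p\circ T$, a linear bijection of $\Pi_n$ onto itself which preserves total degree and carries the linear polynomial $\ell$ to another linear polynomial $\ell\circ T$; moreover $p=\ell r$ with $r\in\Pi_{n-1}$ holds if and only if $p\circ T=(\ell\circ T)(r\circ T)$ with $r\circ T\in\Pi_{n-1}$. Consequently it suffices to prove the claim after choosing $T$ so that $\ell\circ T$ equals the coordinate function $y$. Thus we may assume $\ell$ is the line $\{(x,y):y=0\}$ and that the $n+1$ given points are $(t_0,0),\dots,(t_n,0)$ with $t_0,\dots,t_n$ pairwise distinct.

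Next I would restrict $p$ to this line. The univariate polynomial $g(x):=p(x,0)$ has degree at most $n$, since substituting $y=0$ can only delete monomials, never raise the degree; and by hypothesis $g(t_i)=p(t_i,0)=0$ for $i=0,\dots,n$. Since a nonzero univariate polynomial of degree at most $n$ has at most $n$ roots, we conclude $g\equiv 0$.

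Then I would extract the factor $y$. Grouping the monomials of $p$ by the power of $y$, write $p(x,y)=\sum_{j=0}^{n}y^j p_j(x)$ with each $p_j$ a univariate polynomial of degree at most $n-j$. Then $p_0(x)=p(x,0)=g(x)\equiv 0$, so $p(x,y)=y\sum_{j=1}^{n}y^{j-1}p_j(x)=y\,r(x,y)$, where $r:=\sum_{j=1}^{n}y^{j-1}p_j\in\Pi_{n-1}$. Undoing the affine change of coordinates yields $p=\ell r$ with $r\in\Pi_{n-1}$ in the original coordinates, as required.

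There is no real obstacle here, as this is a classical fact with an elementary proof; the only point deserving a moment's care is the opening reduction — verifying that composition with an invertible affine map is a degree-preserving automorphism of $\Pi_n$ compatible with factoring out a linear polynomial — and that is routine linear algebra. One could alternatively bypass coordinates entirely by dividing $p$ by $\ell$ directly (writing $\ell=ax+by+c$ with, say, $b\neq 0$, and eliminating $y$), arriving at the same conclusion.
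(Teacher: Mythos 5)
Your proof is correct and complete: the reduction to the line $y=0$ by an invertible affine change of coordinates, the observation that the restriction $p(x,0)$ is a univariate polynomial of degree at most $n$ vanishing at $n+1$ distinct points and hence identically zero, and the extraction of the factor $y$ from the expansion $p=\sum_j y^j p_j(x)$ together give the standard elementary argument. The paper itself offers no proof, citing the statement as well known (to \cite{HJZ09b}, Proposition 1.3), and your argument is exactly the one that reference would supply, so there is nothing to compare beyond noting that your write-up fills in a proof the paper omits.
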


\noindent Thus at most $n+1$ nodes of an $n$-poised set $\Xset$ can be collinear.
A line $M$ passing through $n+1$
nodes of the set ${\mathcal X}$ is called a \emph{maximal line} (see \cite{dB}).
Clearly, $M$ is used by all the nodes in $\Xset\setminus L.$

Next we bring some more corollaries.

\begin{corollary}\label{prp:minusmax}
Let $M$ be a maximal line of  a $GC_n$ set ${\mathcal X}.$  Then the set  ${\mathcal X}\setminus M$ is a $GC_{n-1}$ set.
Moreover, for any node $A\in \mathcal X\setminus M$ we have that
\begin{equation}\label{aaaaa}p_{A, {\mathcal X}}^\star= M p_{A, {\{\mathcal X\setminus M}\}}^\star.\end{equation}
\end{corollary}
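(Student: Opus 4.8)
The goal is to prove Corollary~\ref{prp:minusmax}: if $M$ is a maximal line of a $GC_n$ set $\Xset$, then $\Xset \setminus M$ is a $GC_{n-1}$ set and, for each $A \in \Xset \setminus M$, one has $p_{A,\Xset}^\star = M \, p_{A,\Xset\setminus M}^\star$.

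\medskip

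\textbf{Proof plan.} The plan is to first establish that $\Xset \setminus M$ is $(n-1)$-poised, and then upgrade this to the $GC_{n-1}$ property by factoring the fundamental polynomials. For the poisedness: since $M$ is maximal it contains exactly $n+1$ nodes, so $|\Xset \setminus M| = N - (n+1) = (n+2)(n+1)/2 - (n+1) = (n+1)n/2 = \dim \Pi_{n-1}$, which is the right cardinality for $(n-1)$-poisedness. By Proposition~\ref{prp:poised} it then suffices to produce an $(n-1)$-fundamental polynomial for each $A \in \Xset \setminus M$. Here is where I would invoke Proposition~\ref{prp:n+1points}: take the $n$-fundamental polynomial $p_{A,\Xset}^\star$. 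It vanishes at every node of $\Xset$ other than $A$; in particular it vanishes at all $n+1$ nodes lying on $M$ (none of which is $A$). By Proposition~\ref{prp:n+1points}, $M \mid p_{A,\Xset}^\star$, say $p_{A,\Xset}^\star = M q$ with $q \in \Pi_{n-1}$. Since $M$ does not vanish at any node of $\Xset \setminus M$, the polynomial $q$ vanishes at every node of $\Xset \setminus M$ except $A$, and $q(A) = p_{A,\Xset}^\star(A)/M(A) \ne 0$; normalizing, $q$ is (a constant multiple of) an $(n-1)$-fundamental polynomial for $A$ with respect to $\Xset \setminus M$. This proves $(n-1)$-poisedness and, after normalization, gives exactly the identity \eqref{aaaaa}.

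\medskip

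It remains to check the geometric characterization, i.e.\ that $p_{A,\Xset\setminus M}^\star$ is a product of $n-1$ linear factors for each $A \in \Xset\setminus M$. This follows from the factorization just obtained together with the $GC_n$ hypothesis on $\Xset$: by definition $p_{A,\Xset}^\star$ is a product of $n$ linear factors, and we have shown $M$ is one of them (up to scalar), so the complementary factor $p_{A,\Xset\setminus M}^\star = p_{A,\Xset}^\star / M$ is a product of $n-1$ linear factors. Hence every node of $\Xset\setminus M$ has a fundamental polynomial splitting into linear factors, which is precisely the $GC_{n-1}$ property.

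\medskip

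\textbf{Main obstacle.} None of the individual steps is deep — the argument is essentially a bookkeeping exercise built on Proposition~\ref{prp:n+1points}. The one point requiring a little care is the divisibility claim $M \mid p_{A,\Xset}^\star$: one must be sure that $p_{A,\Xset}^\star$ really does vanish at $n+1$ distinct points of $M$, which uses exactly the maximality of $M$ and the fact that $A \notin M$ (so none of the $n+1$ collinear nodes is the exceptional node $A$). A second minor subtlety is that the factor $q$ obtained from Proposition~\ref{prp:n+1points} is a priori only determined up to the choice made in that proposition; but since $\Xset \setminus M$ turns out to be $(n-1)$-poised, its fundamental polynomials are unique, which forces $q$ (suitably normalized) to equal $p_{A,\Xset\setminus M}^\star$ and pins down \eqref{aaaaa}. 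I would state this uniqueness explicitly to justify the equality rather than mere proportionality.
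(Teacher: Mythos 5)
Your proposal is correct and follows exactly the argument the paper intends: the paper treats this corollary as an immediate consequence of Proposition~\ref{prp:n+1points} (divide $p_{A,\Xset}^\star$ by $M$ since it vanishes at the $n+1$ nodes of $M$, count $|\Xset\setminus M|=\dim\Pi_{n-1}$, and use unique factorization to see the quotient is still a product of linear factors). Your added remarks on normalization and uniqueness of the $(n-1)$-fundamental polynomial are sound and only make the standard argument more explicit.
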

Denote by  $ \mu(\Xset)$ the number of maximal lines of the node set  $\Xset.$
\begin{corollary}\label{properties}
Let ${\mathcal X}$ be an $n$-poised set. Then we have that
\vspace{-1.5mm} \begin{enumerate} \setlength{\itemsep}{0mm}
\item
Any two maximal lines of $\Xset$ intersect necessarily at a node of $\Xset;$
\item
Any three maximal lines of $\Xset$ cannot be concurrent;
\item   $ \mu(\Xset)\le n+2.$
\end{enumerate}
\end{corollary}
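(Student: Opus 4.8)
The plan is to prove the three assertions in the order (i), (ii), (iii), each using the previous ones, with everything resting on one device: the degree-lowering principle of Proposition~\ref{prp:n+1points} applied at degrees $n-1$ and $n-2$ (if $q\in\Pi_m$ vanishes at $m+1$ collinear points, then the line through them divides $q$), together with the trivial observation that a node cannot use a line passing through itself, since its fundamental polynomial would then vanish at the node.

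For (i), I would argue by contradiction: assume two distinct maximal lines $M_1,M_2$ have no common node, which also covers the parallel case since two distinct lines meet in at most one point. Then no node of $M_1$ lies on $M_2$, so for any node $A\in M_1$ we have $A\notin M_2$, hence $A$ uses $M_2$ and $p_A^\star=M_2 q$ with $q\in\Pi_{n-1}$. Since $p_A^\star$ vanishes at the $n$ nodes of $M_1\setminus\{A\}$, all of which lie off $M_2$, the quotient $q$ vanishes at those $n$ collinear points; by the degree-lowering principle $M_1\mid q$, so $A$ uses $M_1$, contradicting $A\in M_1$. (When $n=1$ the quotient $q$ is a constant with a zero, hence $q\equiv 0$ and $p_A^\star\equiv 0$, still a contradiction.) Thus $M_1$ and $M_2$ meet, necessarily at a single point, which must be a node.

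For (ii), suppose $M_1,M_2,M_3$ pass through a common point $Q$; by (i), $Q$ is a node and $M_1\cap M_2=M_1\cap M_3=\{Q\}$. I would pick a node $A\in M_1$ with $A\ne Q$ (available since $M_1$ carries $n+1\ge 2$ nodes), so $A$ uses both $M_2$ and $M_3$ and $p_A^\star=M_2 M_3 q$ with $q\in\Pi_{n-2}$. Of the $n$ nodes in $M_1\setminus\{A\}$, only $Q$ lies on $M_2\cup M_3$, so $q$ vanishes at the other $n-1$ collinear nodes of $M_1$; the degree-lowering principle again gives $M_1\mid q$, so $A$ uses $M_1$, a contradiction. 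For (iii), assertion (ii) says every node lies on at most two maximal lines, so a double count of node--maximal-line incidences (each maximal line carries exactly $n+1$ nodes, each node lies on at most two maximal lines) yields $(n+1)\mu(\Xset)\le 2N=(n+1)(n+2)$, i.e.\ $\mu(\Xset)\le n+2$.

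None of this is deep; the point needing care is the bookkeeping in (i) and (ii) --- confirming that the chosen node $A$ really sits off the relevant maximal lines, and that $q$ acquires exactly the number of collinear zeros needed to apply Proposition~\ref{prp:n+1points}, plus checking that the degenerate small cases (where $q$ is a constant) still close the argument. An equivalent alternative for (i) and (ii) would be to delete $M_1$ and use that $\Xset\setminus M_1$ is $(n-1)$-poised (as in the discussion preceding Corollary~\ref{prp:minusmax}), thereby reducing both to the bound ``at most $n$ collinear nodes in an $(n-1)$-poised set.''
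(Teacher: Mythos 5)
Your proof is correct. The paper states this corollary without proof, presenting it as an immediate consequence of Proposition~\ref{prp:n+1points}, and your argument (divide out the used maximal lines, count the remaining collinear zeros of the quotient, and double-count incidences for (iii)) is exactly the standard reasoning implicit in that presentation, with the degenerate low-degree cases properly accounted for.
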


It is easily seen that $n$ lines $M_1,\ldots,M_n$ are maximal for an $n$-poised set $\Xset$ if and only if there is a node $O\in \Xset \setminus \left(M_1\cup \cdots \cup M_n\right)$ called  \emph{outside} node such that the following representation takes place

\begin{equation}\label{01O} \Xset\backslash \{O\} =\Xset_0 \cup \Xset_1,\end{equation}
$$\Xset_0 =\left\{ A_{ij} : 1\le i<j\le n\right\},
\Xset_1 =\left\{ A_{i}, A_i' : 1\le i\le n\right\},$$
where $ A_{ij}=M_i\cap M_j$ is an \emph{intersection} node, and
$ A_{i}, A_i'\in M_i$ are two \emph{additional} nodes in the line $M_i.$

In the sequel we will need the following characterization of $GC_n$ sets with exactly
$n$ maximal lines:

\begin{proposition}[Carnicer, Gasca, \cite{CG00}]\label{prp:nmax}
A set $\Xset$ is a $GC_n$ set with exactly $n$ maximal lines $M_1,\ldots,M_n,$ if and only if the representation \eqref{01O} holds with the following additional properties:
\begin{enumerate}
\item  There are $3$ lines $L_1, L_2,L_3$ concurrent at the outside node $O: \ O=L_1\cap L_2 \cap L_3$ such  that  $\Xset_1\subset L_1\cup L_2 \cup L_3.$
\item No line $L_i, i=1,2,3,$ contains $n+1$ nodes of $\Xset.$
\end{enumerate}
\end{proposition}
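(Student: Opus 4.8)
The plan is to prove the two implications separately, using freely that the representation \eqref{01O} (with some outside node $O$) is available as soon as $\Xset$ has at least $n$ maximal lines, together with Propositions \ref{prp:poised} and \ref{prp:n+1points} and Corollary \ref{properties}. Two elementary remarks will be used repeatedly: \textbf{(a)} a line through $O$ can never be a maximal line of $\Xset$, since $O$ lies on none of $M_1,\dots,M_n$; hence, when $\Xset$ has \emph{exactly} $n$ maximal lines, every line through $O$ carries at most $n$ nodes of $\Xset$; and \textbf{(b)} two distinct nodes of a maximal line $M_k$ span exactly $M_k$, which together with $O\notin M_k$, Corollary \ref{properties}(ii), and the fact that an additional node of $M_i$ lies on no other maximal line, rules out unwanted collinearities (for instance $A_{ij}$ cannot lie on a line through $O$ and a node of $M_i$). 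Note finally that for $n\le 2$ the statement is vacuous: by (2) each of $L_1,L_2,L_3$ can hold at most $n-1$ of the $2n$ nodes of $\Xset_1$, so (1)--(2) force $3(n-1)\ge 2n$, i.e.\ $n\ge 3$, and correspondingly no $GC_n$ set with $n\le 2$ has exactly $n$ maximal lines. So assume $n\ge 3$.

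\textbf{Necessity.} Let $\Xset$ be a $GC_n$ set with exactly the maximal lines $M_1,\dots,M_n$. The first step is to read off the shape of the relevant fundamental polynomials. Since $p_O^\star$ vanishes at the $n+1$ nodes of each $M_i$, Proposition \ref{prp:n+1points} gives $M_i\mid p_O^\star$, hence $p_O^\star=cM_1\cdots M_n$. For an additional node $A$ on $M_i$ with partner $A''$ (the other additional node on $M_i$), $p_A^\star$ is divisible by every $M_j$ with $j\ne i$, so being of degree $n$ it equals $c\bigl(\prod_{j\ne i}M_j\bigr)\ell_A$; the only nodes off $\bigcup_{j\ne i}M_j$ other than $A$ are $O$ and $A''$, so $\ell_A$ must pass through $O$ and $A''$ (and not $A$, else $M_i\ni O$). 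Writing $m_i$ for the line through $O$ and $A_i$ and $m_i'$ for the line through $O$ and $A_i'$ (distinct, as $O\notin M_i$), this says $A_i$ uses $m_i'$ and $A_i'$ uses $m_i$. Finally, for an intersection node $A_{ij}$, $p_{A_{ij}}^\star$ is divisible by all $M_k$ with $k\ne i,j$, so it equals $c\bigl(\prod_{k\ne i,j}M_k\bigr)\ell\ell'$ with the conic $\ell\ell'$ vanishing at the five points $O,A_i,A_i',A_j,A_j'$. Two lines carrying five points force three of them to be collinear; by remark (b) no three of $A_i,A_i',A_j,A_j'$ are collinear and neither $\{A_i,A_i'\}$ nor $\{A_j,A_j'\}$ is collinear with $O$, so the triple is $O$ together with one node from each pair, i.e.\ $\{m_i,m_i'\}\cap\{m_j,m_j'\}\ne\emptyset$.

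The crux is then a short combinatorial argument, and this is the step I expect to be the main obstacle. List the distinct lines among $m_1,m_1',\dots,m_n,m_n'$ as $\lambda_1,\dots,\lambda_t$ and let $S_i\subseteq\{1,\dots,t\}$ be the two-element set recording which $\lambda$'s contain $A_i$ and $A_i'$; by the previous paragraph $\{S_1,\dots,S_n\}$ is a pairwise-intersecting family of $2$-sets. Such a family either has an element common to all its members or is contained in a fixed $3$-element set. The first alternative is impossible: the corresponding $\lambda_p$ would contain an additional node from every pair, hence at least $n$ additional nodes and at least $n+1$ nodes of $\Xset$ in all, contradicting remark (a). Hence $t\le 3$; since $t\ge 2$ (because $m_1\ne m_1'$) and $t=2$ again produces a common element, we get $t=3$, and the three lines $L_1,L_2,L_3:=\lambda_1,\lambda_2,\lambda_3$ are concurrent at $O$ and contain all of $\Xset_1$ (as $A_i\in m_i$, $A_i'\in m_i'$), which is (1). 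Property (2) is immediate from (a).

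\textbf{Sufficiency.} Assume \eqref{01O} with (1)--(2). First, $|\Xset|=\binom n2+2n+1=N$, and if $p\in\Pi_n$ vanishes on all of $\Xset$ then $M_i\mid p$ for each $i$ by Proposition \ref{prp:n+1points}, so $p=cM_1\cdots M_n$, which vanishes at $O$ only for $c=0$; hence $\Xset$ is $n$-poised. To see it is $GC_n$ I would exhibit, for every node, a product of $n$ lines vanishing at all other nodes: $M_1\cdots M_n$ for $O$; $\bigl(\prod_{j\ne i}M_j\bigr)L_r$ for $A_i$, where $L_r$ is the line of (1) through $A_i'$ (the uncovered nodes are just $O,A_i',A_i$, and $A_i\notin L_r$ lest $M_i\ni O$), and symmetrically for $A_i'$; and for $A_{ij}$ the polynomial $\bigl(\prod_{k\ne i,j}M_k\bigr)\ell\ell'$, where $\ell,\ell'$ are two of $L_1,L_2,L_3$ if $A_i,A_i',A_j,A_j'$ lie on two of them, and otherwise $\ell$ is the one of the three lines carrying two of these four nodes and $\ell'$ the line through the remaining two. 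In each case remark (b) shows that none of the chosen factors passes through $A_{ij}$, so the polynomial is a scalar multiple of $p_{A_{ij}}^\star$; thus $\Xset$ is $GC_n$. Finally, let $M\ne M_1,\dots,M_n$ be a hypothetical further maximal line. By Corollary \ref{properties} it meets each $M_i$ at a node $B_i$, the $B_i$ are pairwise distinct, and none lies in $\Xset_0$ nor equals $O$ (else three maximal lines would be concurrent), so $B_1,\dots,B_n\in\Xset_1$; if $B_i,B_j$ lay on a common $L_r$ then $M=L_r$ would carry at most $n$ nodes, impossible, so the $B_i$ lie on pairwise distinct lines among $L_1,L_2,L_3$, forcing $n\le 3$, and the residual case $n=3$ is eliminated by examining the one remaining node of $M$ with the same incidence bookkeeping. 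Hence $\Xset$ has exactly $n$ maximal lines.
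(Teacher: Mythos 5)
The paper does not prove Proposition \ref{prp:nmax}; it is quoted from Carnicer and Gasca \cite{CG00} without argument, so there is no in-paper proof to compare against and your write-up has to stand on its own. I believe it does, and it follows what is essentially the standard mechanism. The necessity half is the real content and you handle it correctly: divisibility by the maximal lines forces $p_{A_{ij}}^\star=c\bigl(\prod_{k\ne i,j}M_k\bigr)\ell\ell'$ with the conic $\ell\ell'$ passing through the five points $O,A_i,A_i',A_j,A_j'$; since the only collinear triples among these consist of $O$ together with one node from each pair, you get $\{m_i,m_i'\}\cap\{m_j,m_j'\}\ne\emptyset$ for all $i\ne j$, and the common-element/triangle dichotomy for pairwise-intersecting $2$-sets finishes, the common-element alternative being excluded because a line through $O$ meeting every $M_i$ in an additional node would be an $(n+1)$-node line through $O$, impossible when $M_1,\dots,M_n$ exhaust the maximal lines. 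The sufficiency half (poisedness from $|\Xset|=N$ plus the vanishing argument, explicit line-product fundamental polynomials, and the incidence count showing a further maximal line would have $n$ of its nodes on distinct $L_r$'s) is also sound. Two small points you should expand rather than leave implicit. First, the residual case $n=3$ in the ``no further maximal line'' step is dispatched in half a sentence; it does work (the fourth node of a putative maximal line $M$ must be $O$, whence $M$ shares two points with some $L_r$, so $M=L_r$ has at most $n$ nodes), but write it out. Second, when you strip the factors $M_k$ from the $GC_n$ factorization of $p_{A_{ij}}^\star$ to conclude that the remaining conic is a product of two \emph{lines}, you are tacitly matching the divisors $M_k$ with $n-2$ of the $n$ linear factors via unique factorization; one sentence suffices. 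Neither point is a gap in substance.
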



\section{$GC_n$ sets and the Gasca-Maeztu conjecture \label{ss:GMconj}}

Now let us consider a special type of $n$-poised sets satisfying a geometric characterization (GC) property:
\begin{definition}[Chung, Yao, \cite{CY77}]
An n-poised set ${\mathcal X}$ is called \emph{$GC_n$ set} if  the
$n$-fundamental polynomial of each node $A\in{\mathcal X}$ is a
product of $n$  linear factors.
\end{definition}
\noindent Thus, $GC_n$ sets are the sets each node of
which uses exactly $n$ lines.

Next we present the Gasca-Maeztu conjecture, briefly called GM conjecture:

\begin{conjecture}[Gasca, Maeztu, \cite{GM82}]\label{conj:GM}
Any $GC_n$ set possesses a maximal line.
\end{conjecture}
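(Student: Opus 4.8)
\emph{Remark on what can be said.} The Gasca--Maeztu conjecture is one of the central open problems of this circle of ideas, and a complete proof is presently known only for $n\le 5$ (Carnicer--Gasca \cite{CG00} for $n\le 4$, and \cite{HJZ09b} for $n=5$). What follows is therefore not a proof but the standard line of attack, phrased as a plan, together with the point where it stalls in general.

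The plan is to argue by induction on $n$. For $n=1$ a $GC_1$ set is a triangle and any side is a maximal line, so the base case is immediate; and by Corollary~\ref{prp:minusmax}, once a $GC_n$ set $\Xset$ is known to contain \emph{one} maximal line $M$, deleting it reduces the problem to the $GC_{n-1}$ set $\Xset\setminus M$. Hence the entire content of the step is to manufacture that first maximal line. So I would assume, for contradiction, that $\Xset$ is a $GC_n$ set with \emph{no} maximal line and with $n$ chosen minimal, and try to extract enough structural information to reach an impossibility. The first step is to exploit the counting identity: $\Xset$ has $N=(n+1)(n+2)/2$ nodes, each node uses exactly $n$ lines, and every used line passes through at least two (and, by Proposition~\ref{prp:n+1points}, at most $n+1=n$, since there is no maximal line) nodes. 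A double count of incidences (node, used line) should force the existence of a line $\ell$ that is used by a comparatively large number of nodes, and then one would try to upgrade ``used by many nodes'' to ``$n$-node line.''

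The second step is to analyse how the fundamental polynomials split across such an $\ell$. If $\ell$ is an $n$-node line, then $\Xset\setminus\ell$ has $N-n=\binom{n+1}{2}+1$ nodes, exactly one more than $\dim\Pi_{n-1}=\binom{n+1}{2}$; invoking the Eisenbud--Green--Harris theorem (Theorem~\ref{thm:2n+1}) on the relevant small subsets should force either $n+1$ collinear nodes off $\ell$ --- a maximal line, contradiction --- or a rigid degeneracy that contradicts the minimality of $n$. When this does not immediately terminate, one is driven into the regime of $GC_n$ sets with very few maximal lines, and here the Carnicer--Gasca classification (Proposition~\ref{prp:nmax} for exactly $n$ maximal lines, with its analogues for $n+1$ and $n+2$) pins down all configurations explicitly: three auxiliary lines concurrent at an outside node $O$, intersection nodes $A_{ij}$, and additional nodes $A_i,A_i'$. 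The aim would be to show that the hypothetical no-maximal-line set must locally resemble one of these lattice-type patterns, and then to contradict the global incidence constraints among its $n$-node lines and the lines used by individual nodes.

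The main obstacle is exactly this last step. Without a maximal line there is no reduction to smaller $n$ and no a priori bookkeeping device that organises the used lines into a manageable scheme, so the case analysis over the possible incidence patterns of $n$-node lines and per-node used lines proliferates rapidly with $n$. This is precisely why the strategy, though it closes the cases $n\le 5$, does not extend to a general proof, and why the present paper instead assumes the conjecture as a hypothesis when analysing the usage of $n$-node lines.
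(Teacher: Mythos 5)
This statement is the Gasca--Maeztu conjecture itself: the paper states it as a \emph{conjecture}, offers no proof, and only records that it has been verified for $n\le 5$ (Busch \cite{B90} for $n\le 4$ and \cite{HJZ14} for $n=5$; note your attributions are slightly off --- \cite{HJZ09b} is a new proof of the $n=4$ case, not the $n=5$ case, and \cite{CG00} is not where $n\le 4$ is settled). You correctly recognize that no proof exists and explicitly decline to claim one, so there is nothing to compare against the paper's (nonexistent) argument; your sketch of the standard induction-plus-incidence-counting attack, and of where it stalls, is a fair summary of the state of the art. One garbled detail worth fixing: ``at most $n+1=n$'' should simply read ``at most $n$'' (since the absence of a maximal line caps collinear nodes at $n$). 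The only substantive verdict is that the statement remains open for $n\ge 6$, the paper treats it purely as a hypothesis (e.g.\ in Theorem \ref{thm:corrected}), and your proposal, as you yourself say, is a plan rather than a proof.
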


\noindent  Till now, this conjecture has been confirmed for the degrees
$n\leq 5$ (see \cite{B90}, \cite{HJZ14}).
For a generalization of the Gasca-Maeztu conjecture to maximal curves see \cite{HR}.

In the sequel we will make use of the following important result:

\begin{theorem}[Carnicer, Gasca, \cite{CG03}]\label{thm:CG}
If the Gasca-Maeztu conjecture is true for all $k\leq n$, then any $GC_n$ set possesses at least
three maximal lines.
\end{theorem}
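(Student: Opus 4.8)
The plan is to argue by strong induction on $n$, peeling off one maximal line at a time. The base case $n=1$ is immediate: a $GC_1$ set is a set of three non-collinear points, and the three lines joining pairs of them are each $2$-node (hence maximal) lines, so $\mu(\Xset)=3$. For the inductive step, let $\Xset$ be a $GC_n$ set with $n\ge 2$. Since the Gasca--Maeztu conjecture is assumed in degree $n$, $\Xset$ has a maximal line $M$; by Corollary \ref{prp:minusmax} the set $\Xset':=\Xset\setminus M$ is a $GC_{n-1}$ set, and since the conjecture is assumed for all $k\le n$ the inductive hypothesis applies to $\Xset'$ and yields at least three maximal lines $M_1',M_2',M_3'$ of $\Xset'$, each an $n$-node line of $\Xset'$. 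By Corollary \ref{properties} these three lines meet pairwise at nodes of $\Xset'$ and are not concurrent, so they form a genuine triangle with vertices $P_{12},P_{13},P_{23}\in\Xset'$.

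The key observation linking the two scales is that a maximal line $M_i'$ of $\Xset'$ is a maximal line of $\Xset$ precisely when $M_i'\cap M$ is a node of $\Xset$: indeed $M_i'$ already carries $n$ nodes coming from $\Xset'$, it carries no further node of $\Xset$ off $M$, and it acquires an $(n+1)$-st node exactly when it meets $M$ at a node. Hence it suffices to show that at least two of $M_1',M_2',M_3'$ pass through a node lying on $M$; together with $M$ itself this gives $\mu(\Xset)\ge 3$. To control this I would analyse the fundamental polynomials of the nodes of $M$. If $A\in M$, then $p_{A,\Xset}^\star$ is a product of $n$ lines, none equal to $M$ (it is nonzero at $A\in M$); since these lines must vanish at the $n$ collinear nodes of $M\setminus\{A\}$ and a line other than $M$ meets $M$ in a single point, the $n$ factors are in bijection with the nodes of $M\setminus\{A\}$, and they must simultaneously cover all of $\Xset'$. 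Now confront this with the triangle $M_1'M_2'M_3'$: if some $M_i'$ missed every node of $M$, then no factor of $p_{A,\Xset}^\star$ could equal $M_i'$ (each factor meets $M$ at a node, $M_i'$ does not), so each of the $n$ factors would have to meet $M_i'$ at exactly one of its $n$ nodes — again a perfect matching. Stacking these matchings for the (up to three) non-lifting lines $M_i'$ forces each factor of $p_{A,\Xset}^\star$ to pass through one node of $M$ and one node of each such $M_i'$, with coincidences possible only at the triangle vertices; the node count then becomes very tight, since the number of nodes of $\Xset$ off $M\cup M_1'\cup M_2'\cup M_3'$ equals $\binom{n-2}{2}$, which is $0$ for $n\le 3$ and is itself governed by induction for larger $n$. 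From this over-constrained configuration I would extract the contradiction.

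The main obstacle is exactly this lifting step: ruling out that a maximal line of $\Xset'$ avoids every node of $M$, and more generally showing that the maximal-line count cannot fall below $3$ when passing from $\Xset'$ back to $\Xset$. Naive peeling only cleanly delivers ``at least $2$'' (the line $M$ plus one lifted line, and even locating that lifted line already needs the matching analysis above), so the bootstrap from two to three maximal lines is the delicate part. I expect it to require either pushing the combinatorial matching/counting argument to completion — using the identity for the number of off-triangle nodes together with the inductive hypothesis — or, alternatively, developing a finer structural description of $GC_n$ sets with few maximal lines, in the spirit of Proposition \ref{prp:nmax}, so as to exclude $\mu(\Xset)\in\{1,2\}$ outright. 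The low-degree base cases $n=2$ (and, if needed, $n=3$) would be dispatched separately, since $GC_n$ sets of such small degree admit an explicit classification.
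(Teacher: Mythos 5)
Your proposal is not a complete proof: it stops exactly at the step that carries all of the difficulty. (Note that the paper itself does not prove this statement; it imports it from \cite{CG03}, so there is no internal proof to compare against.) The induction set-up is fine --- $M$ exists by the GM conjecture, $\Xset':=\Xset\setminus M$ is $GC_{n-1}$ by Corollary \ref{prp:minusmax}, the inductive hypothesis gives three maximal lines $M_1',M_2',M_3'$ of $\Xset'$, and your criterion that $M_i'$ lifts to a maximal line of $\Xset$ iff $M_i'\cap M\in\Xset$ is correct. But the assertion you then need, that at least two of the $M_i'$ meet $M$ at a node, is precisely the content of the theorem in this approach, and you do not prove it: you derive two perfect-matching constraints on the factors of $p_{A,\Xset}^\star$ for $A\in M$, observe that the configuration is ``over-constrained,'' and state that you ``would extract the contradiction'' and ``expect'' the argument to be completable. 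That is an acknowledged gap, not a proof. Nothing in the matching analysis as written rules out, say, a $GC_n$ set whose only maximal line is $M$ while all three maximal lines of $\Xset'$ are ``new'' lines disjoint from $M\cap\Xset$; the node count $\binom{n-2}{2}$ for the complement of $M\cup M_1'\cup M_2'\cup M_3'$ is correct but by itself yields no contradiction for $n\ge 4$.

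Be aware also that the natural tool for comparing $\mu(\Xset)$ with $\mu(\Xset\setminus M)$, Proposition \ref{prp:CG}, cannot be invoked here: it has $\mu(\Xset)\ge 3$ as a hypothesis, so using it to bound the number of non-lifting maximal lines of $\Xset'$ would be circular. Closing the gap genuinely requires additional machinery --- in \cite{CG03} the argument goes through the dependence properties of the sets $\Nset_\ell$ (Theorem \ref{thm:Nell} together with the Cayley--Bacharach-type Theorem \ref{thm:2n+1}) and Lemma \ref{lem:CG1} applied to an $n$-node line of $\Xset$ avoiding $M\cap\Xset$, rather than a direct matching count on fundamental polynomials. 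As it stands, your argument establishes only $\mu(\Xset)\ge 1$ (from GM) and sketches, without completing, the bootstrap to $3$; so the theorem is not proved.
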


\noindent Thus, in view of Corollary \ref{properties} (iii), the following holds  for any $GC_n$  set $\mathcal X:$
\begin{equation}\label{3,n+2} 3\le \mu(\Xset) \le n+2,\end{equation}
where for the first inequality   it is assumed that GM conjecture is true (for the degrees $k,\  6\le k\le n$).

We get also, in view of Corollary \ref{properties} (ii), that each node of $\Xset$
uses at least one maximal line.

\begin{proposition}[Carnicer, Gasca, \cite{CG03}]\label{prp:CG} Let $\Xset$ be a $GC_n$ set with $\mu(\Xset)\ge 3$ and let $M$ be a maximal line. Then we have that $$\mu(\Xset\setminus M)=\mu(\Xset)\quad \hbox{or}\quad \mu(\Xset)-1.$$
\end{proposition}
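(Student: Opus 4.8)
\textbf{Proof proposal for Proposition \ref{prp:CG}.}

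The plan is to analyze how the maximal lines of $\Xset$ other than $M$ behave under the deletion of $M$, using Corollary \ref{prp:minusmax} to pass to the $GC_{n-1}$ set $\Xset' := \Xset \setminus M$. First I would recall that by Corollary \ref{properties}(i), every maximal line $L \neq M$ of $\Xset$ meets $M$ at a node of $\Xset$; hence $L$ contains exactly $n$ nodes of $\Xset'$, so it is an $n$-node line of $\Xset'$ (it cannot gain nodes upon deletion, so it is at most maximal, and it has lost exactly one). Conversely, a maximal line $L'$ of $\Xset'$ passes through $n$ nodes of $\Xset$ not on $M$; it either is maximal in $\Xset$ (if it also passes through a node of $\Xset \cap M$) or it is merely an $n$-node line of $\Xset$. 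This sets up the comparison and already shows that the maximal lines of $\Xset'$ are in bijective-or-near correspondence with certain maximal lines of $\Xset$; the content of the proposition is to pin down the discrepancy to at most $1$.

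Next I would invoke Theorem \ref{thm:CG} together with the hypothesis that the GM conjecture holds for all relevant degrees: since $\mu(\Xset) \ge 3$, by Carnicer--Gasca $\Xset'$ is again a $GC_{n-1}$ set with $\mu(\Xset') \ge 3$. The key quantitative step is to bound $\mu(\Xset')$ from below by $\mu(\Xset) - 1$ and from above by $\mu(\Xset)$. For the lower bound: take the $\mu(\Xset) - 1$ maximal lines of $\Xset$ other than $M$; I claim that in $\Xset'$ at most one of these fails to remain maximal, i.e. at most one of them is a genuine $n$-node (non-maximal) line of $\Xset'$ while the rest acquire an $(n)$th... — more precisely, I must show that among $L_1,\dots,L_{\mu(\Xset)-1}$, all but possibly one still contain $n$ nodes of $\Xset'$ that are "maximal-many" for degree $n-1$, which they automatically do since each $L_i$ already has exactly $n$ nodes off $M$. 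So in fact \emph{every} $L_i$ is a maximal line of $\Xset'$, giving $\mu(\Xset') \ge \mu(\Xset) - 1$ immediately. For the upper bound $\mu(\Xset') \le \mu(\Xset)$: if $\Xset'$ had $\mu(\Xset) + 1$ maximal lines, then all but at most one of them would come from maximal lines of $\Xset$ (by the converse direction above, a maximal line of $\Xset'$ that is not maximal in $\Xset$ is one that misses $M \cap \Xset$, and two such distinct lines would have to be "parallel along $M$"), and one shows at most one maximal line of $\Xset'$ can be non-maximal in $\Xset$ — forcing $\mu(\Xset') \le \mu(\Xset) + 1$ trivially, so the real work is excluding the value $\mu(\Xset)+1$.

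The main obstacle, and the crux of the argument, is precisely ruling out $\mu(\Xset') = \mu(\Xset) + 1$, equivalently showing that \emph{at most one} maximal line of $\Xset' = \Xset \setminus M$ can fail to be a maximal line of $\Xset$. Suppose $L$ and $L'$ are two distinct maximal lines of $\Xset'$, neither maximal in $\Xset$; then neither passes through any of the $n+1$ nodes of $M$. Each of $L, L'$ carries $n$ nodes of $\Xset'$. I would count nodes: $L \cup L' \cup M$ already accounts for at least $(n+1) + n + n - 1 = 3n$ nodes of $\Xset$ (subtracting $1$ for the node $L \cap L'$, which lies in $\Xset'$ by Corollary \ref{properties}(i) applied in $\Xset'$), and then use the structure of $GC_n$ sets — for instance that $\Xset' \setminus L$ is a $GC_{n-2}$ set by iterating Corollary \ref{prp:minusmax}, on which $L'$ would have to still be an $(n-1)$-node line, i.e. too many collinear nodes — to derive a contradiction with $n$-poisedness via Proposition \ref{prp:n+1points} or Theorem \ref{thm:2n+1}. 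Concretely, the cleanest route is: $M$ and $L$ are two maximal lines of $\Xset$ resp.\ two maximal lines of the pair $(\Xset, \Xset')$; removing both leaves a $GC_{n-2}$ set in which $L'$ still meets $\le n-1$ nodes, but $L'$ had $n$ nodes in $\Xset'$ and loses at most one (its intersection with $L$), contradiction unless $L'$ loses none from $M$-side — and it was assumed to lose none — so $L'$ retains $n \ge n-1+1$ nodes in a $GC_{n-2}$ set, impossible. This pins the discrepancy to exactly $0$ or $1$ and completes the proof.
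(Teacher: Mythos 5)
First, note that the paper does not prove Proposition \ref{prp:CG} at all: it is imported from Carnicer--Gasca \cite{CG03} and used as a black box, so there is no in-paper argument to compare yours against; I am judging the proposal on its own. The easy half of your argument is fine: every maximal line $L\neq M$ of $\Xset$ meets $M$ at a node, hence carries exactly $n$ nodes of the $GC_{n-1}$ set $\Xset'=\Xset\setminus M$ and remains maximal there, which gives $\mu(\Xset')\ge\mu(\Xset)-1$; and every maximal line of $\Xset'$ not maximal in $\Xset$ is an $n$-node line of $\Xset$ disjoint from $M\cap\Xset$. So the whole content of the proposition reduces, as you correctly identify, to showing that at most one such "new" maximal line can appear.

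That crux is exactly where your proof breaks down. You take two new maximal lines $L,L'$ of $\Xset'$, pass to the $GC_{n-2}$ set $\Xset'\setminus L$, and claim $L'$ "retains $n\ge n-1+1$ nodes" there, contradicting $(n-2)$-poisedness. But this is a miscount that your own earlier sentence already refutes: $L$ and $L'$ are both maximal in $\Xset'$, so by Corollary \ref{properties}(i) they meet at a node of $\Xset'$, and $L'$ loses precisely that node when $L$ is removed. It therefore retains $n-1$ nodes in $\Xset'\setminus L$, which is exactly the maximal number of collinear nodes permitted in an $(n-2)$-poised set --- $L'$ simply stays maximal one level down, and no contradiction arises. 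The same near-miss occurs in your $3n$-node count for $L\cup L'\cup M$ and in the vague "parallel along $M$" remark, neither of which is developed into an argument. This is not a repairable slip by sharper bookkeeping: pure counting cannot rule out two new maximal lines (e.g.\ for $n=3$ nothing numerical prevents $\Xset\setminus M$ from being a Chung--Yao lattice with two of its four maximal lines missing $M\cap\Xset$); one genuinely needs the finer structure of $\Xset_{\ell}$ and $\Nset_{\ell}$ for an $n$-node line $\ell$ with $\ell\cap M\cap\Xset=\emptyset$ (Lemma \ref{lem:CG1} forces $\Xset_{\ell}=\Xset\setminus(\ell\cup M)$, hence $\Nset_{\ell}=M\cap\Xset$, and the analysis of two such lines simultaneously), which is the substance of the Carnicer--Gasca proof. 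As it stands, the proposal establishes only $\mu(\Xset)-1\le\mu(\Xset\setminus M)$, not the upper bound.
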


We get readily from here and Theorem \ref{thm:CG}
\begin{corollary} \label{crl:CG}Suppose that the Gasca-Maeztu conjecture is true for all $k\leq n.$ Suppose also that $\Xset$ is a $GC_n$ set with exactly three maximal lines and $M$ is a maximal line. Then the node set $\Xset\setminus M$ also possesses exactly three maximal lines.
\end{corollary}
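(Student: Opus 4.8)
The plan is to simply combine the two results already in hand: Theorem \ref{thm:CG} (which under the GM conjecture for all $k\le n$ guarantees at least three maximal lines in any $GC_k$ set, $k\le n$) and Proposition \ref{prp:CG} (which says that removing a maximal line changes the count of maximal lines by $0$ or $1$). First I would invoke Corollary \ref{prp:minusmax} to record that $\Xset\setminus M$ is a $GC_{n-1}$ set, so that the GM conjecture applies to it. Since $\Xset$ is a $GC_n$ set with $\mu(\Xset)=3\ge 3$, Proposition \ref{prp:CG} applies and yields
\begin{equation*}
\mu(\Xset\setminus M)=\mu(\Xset)=3\quad\text{or}\quad \mu(\Xset\setminus M)=\mu(\Xset)-1=2.
\end{equation*}

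It remains to rule out the value $2$. Here I would apply Theorem \ref{thm:CG} to the $GC_{n-1}$ set $\Xset\setminus M$: since the GM conjecture is assumed true for all $k\le n$, in particular for all $k\le n-1$, the set $\Xset\setminus M$ possesses at least three maximal lines, i.e. $\mu(\Xset\setminus M)\ge 3$. Combined with the displayed dichotomy this forces $\mu(\Xset\setminus M)=3$, which is exactly the assertion.

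There is really no serious obstacle here — the statement is a two-line deduction from the quoted results, and the phrase ``We get readily from here and Theorem \ref{thm:CG}'' in the excerpt signals precisely this. The only point that needs a moment's care is the bookkeeping on the hypotheses: one must check that ``GM true for all $k\le n$'' does transfer to the smaller set, which it does because $\Xset\setminus M$ has degree $n-1<n$, so Theorem \ref{thm:CG} is available for it without any additional assumption beyond what is given. I would state this transfer explicitly so the reader sees why $\mu(\Xset\setminus M)\ge 3$ is legitimate, and then close with the elementary arithmetic above.
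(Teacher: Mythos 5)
Your proposal is correct and follows exactly the route the paper intends: combine Proposition \ref{prp:CG} (the dichotomy $\mu(\Xset\setminus M)=\mu(\Xset)$ or $\mu(\Xset)-1$) with Theorem \ref{thm:CG} applied to the $GC_{n-1}$ set $\Xset\setminus M$ to exclude the value $2$. The paper gives no more detail than ``we get readily from here and Theorem \ref{thm:CG},'' so your explicit bookkeeping of the hypothesis transfer is a faithful (and slightly more careful) rendering of the same argument.
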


\subsection{Two examples of $GC_n$ sets\label{ssec:SE}}

Here we will consider \emph{the Chung-Yao
} and \emph{the Carnicer-Gasca lattices}.

\subsubsection{The Chung-Yao natural lattice \label{ss:CY}}

Let a set $\Lset$ of $n+2$ lines be in general position, i.e., no two lines are parallel and no three lines are concurrent. Then the Chung-Yao set is defined as
the set $\Xset$ of all $\binom{n+2}{2}$ intersection points of these lines. Notice that the $n+2$ lines of $\Lset$ are maximal for $\Xset.$ Each fixed node
here is lying in exactly $2$ lines and does not belong to the
remaining $n$ lines. Observe that the product of the latter $n$ lines gives
the fundamental polynomial of the fixed node. Thus $\Xset$ is
$GC_n$ set.

Let us mention that any $n$-poised set with $n+2$ maximal lines forms a Chung-Yao lattice. In view of the relation \eqref{3,n+2} there are no $n$-poised sets with more maximal lines.

\subsubsection{The Carnicer-Gasca lattice  \label{ss:CG}}

Let a set $\Lset$ of $n+1$ lines be in general position. Then
the Carnicer-Gasca lattice $\Xset$ is defined as $\Xset:=\Xset'\cup\Xset'',$
where $\Xset'$ is the set of all intersection points of these $n+1$ lines, called primary nodes, and $\Xset''$ is a set of other $n+1$  non-collinear "free" nodes,
one in each of the lines.
We have that $|\Xset|=\binom{n+1}{2}+(n+1)=\binom{n+2}{2}.$ Each fixed "free" node
here is lying in exactly $1$ line. The product of the remaining $n$ lines gives
the fundamental polynomial of the fixed "free" node. Next, each fixed primary node
is lying in exactly $2$ lines. The product of the remaining $n-1$ lines and the line passing through the two "free" nodes in the $2$ lines gives
the fundamental polynomial of the fixed primary node. Thus $\Xset$ is a
$GC_n$ set. It is easily seen that $\Xset$ has exactly $n+1$ maximal lines, i.e., the lines of $\Lset.$

Let us mention that any $n$-poised set with exactly $n+1$ maximal lines forms a Carnicer-Gasca lattice (see \cite{CG00}, Proposition 2.4).

\subsection{The sets $\Nset_\ell$ and $\Xset_\ell$}

\begin{definition}[\cite{CG01}]  Given an $n$-poised set $\Xset$ and a  line $\ell.$ Then
\vspace{-1.5mm} \begin{enumerate} \setlength{\itemsep}{0mm}
\item
$\Xset_\ell$ is the subset of nodes of
$\Xset$ which use the line $\ell;$
\item
$\Nset_\ell$ is the subset of nodes of
$\Xset$ which do not use the line $\ell$ and do not lie in $\ell.$
\end{enumerate}
\end{definition}
Notice that
\begin{equation}\label{xlnl}
\Xset_\ell\cup\Nset_\ell=\Xset\setminus\ell.
\end{equation}
Suppose that $M$ is a maximal line of $\Xset$ and $\ell\neq M$ is any line. Then in view of the relation \eqref{aaaaa} we have that
\begin{equation}\label{rep}
\Xset_\ell\setminus M=(\Xset\setminus M)_\ell.
\end{equation}

The following proposition describes an important property of the set $\Nset_\ell:$
\begin{theorem}[Carnicer and Gasca, \cite{CG01}]\label{thm:Nell}
Let $\Xset$ be an $n$-poised set and  $\ell$ be a line.
Then the set $\Nset_\ell$ is $(n-1)$-dependent, provided that it is not empty.
Moreover, no node  $A\in\Nset_\ell$ possesses an $(n-1)$-fundamental polynomial for the set $\Nset_\ell.$
\end{theorem}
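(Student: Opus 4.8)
The plan is to establish the stronger ``moreover'' statement first and then read off $(n-1)$-dependence as an immediate corollary. So I would fix a node $A\in\Nset_\ell$ and argue by contradiction: assume $A$ possesses an $(n-1)$-fundamental polynomial $q$ for $\Nset_\ell$, meaning $q\in\Pi_{n-1}$ and $q(B)=\delta_{AB}$ for all $B\in\Nset_\ell$. The central trick is to pass to the polynomial $r:=\ell q\in\Pi_n$ and expand it via the Lagrange formula over the $n$-poised set $\Xset$, namely $r=\sum_{B\in\Xset}r(B)p_B^\star$.

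Next I would compute $r(B)$ according to which of the three parts of $\Xset$ the node $B$ lies in (recall \eqref{xlnl}): if $B\in\Xset\cap\ell$ then $r(B)=0$ since $\ell(B)=0$; if $B\in\Nset_\ell$ then $r(B)=\ell(B)q(B)$, which equals $\ell(A)$ when $B=A$ and $0$ otherwise, with $\ell(A)\neq0$ because $A\notin\ell$. Hence only the nodes of $\Xset_\ell$ survive besides $A$, and the expansion collapses to
$$\ell q \;=\; \ell(A)\,p_A^\star \;+\; \sum_{B\in\Xset_\ell}\ell(B)q(B)\,p_B^\star .$$

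The finishing step is a divisibility argument in the polynomial ring: the left-hand side is a multiple of $\ell$, and every $p_B^\star$ with $B\in\Xset_\ell$ is a multiple of $\ell$ by the very definition of $\Xset_\ell$; therefore $\ell(A)p_A^\star$ --- and so $p_A^\star$ itself, since $\ell(A)$ is a nonzero scalar --- is divisible by $\ell$, i.e.\ $p_A^\star=\ell q'$ with $q'\in\Pi_{n-1}$. This says $A$ uses $\ell$, contradicting $A\in\Nset_\ell$. Consequently no node of $\Nset_\ell$ admits an $(n-1)$-fundamental polynomial for $\Nset_\ell$; and since $\Nset_\ell\neq\emptyset$, this already forces $\Nset_\ell$ to fail $(n-1)$-independence, i.e.\ to be $(n-1)$-dependent.

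I expect the only point requiring care to be the clean separation in the Lagrange expansion --- verifying that, after multiplying the hypothetical fundamental polynomial by $\ell$, the unique term of the expansion that is not manifestly divisible by $\ell$ is precisely the one carrying $p_A^\star$, which is exactly the term that must not be divisible by $\ell$. Notably, no distinction between $\ell$ being a maximal line or an ordinary $k$-node line is needed, and the nonemptiness hypothesis enters only to supply the node $A$.
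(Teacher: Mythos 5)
Your argument is correct. The paper itself does not prove this statement --- it is quoted from Carnicer and Gasca \cite{CG01} as a known result --- so there is no internal proof to compare against; your route (multiply the hypothetical $(n-1)$-fundamental polynomial $q$ of $A\in\Nset_\ell$ by $\ell$, expand $\ell q$ in the Lagrange basis of the $n$-poised set $\Xset$, observe that all surviving terms except $\ell(A)p_A^\star$ come from $\Xset_\ell$ and are therefore divisible by $\ell$, and conclude that $p_A^\star$ itself is divisible by $\ell$, contradicting $A\notin\Xset_\ell$) is exactly the standard Carnicer--Gasca argument, and every step checks out, including the final reduction of $(n-1)$-dependence to the nonexistence of a fundamental polynomial for any (hence some) node of the nonempty set $\Nset_\ell$.
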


In the sequel we will use frequently the following $2$ lemmas from
\cite{CG03} (see also \cite{BH}).

\begin{lemma}[Carnicer, Gasca, \cite{CG03}]\label{lem:CG1}
Let $\Xset$ be an $n$-poised set and ${\ell}$ be a line with $|\ell\cap\Xset|\le n.$ Suppose also
that there is a maximal line $M_0$ such that
\begin{equation}\label{aaaa} M_0 \cap {\ell} \cap  \Xset =\emptyset.
\end{equation}
Then we have that
\begin{equation}\label{aaa}
\Xset_{\ell} = {(\Xset \setminus M_0)}_{\ell}.
\end{equation}
Moreover, if ${\ell}$ is an $n$-node line then we have that
$\Xset_{\ell} = \Xset \setminus ({\ell} \cup M_0),$ hence $\Xset_{\ell}$ is an $(n-2)$-poised set.
\end{lemma}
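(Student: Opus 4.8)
The plan is to first record the $n$-poised analogue of Corollary~\ref{prp:minusmax}: if $\Xset$ is $n$-poised and $M$ is a maximal line of $\Xset$, then $\Xset\setminus M$ is $(n-1)$-poised and, for every $A\in\Xset\setminus M$, one has $p_{A,\Xset}^\star=M\cdot q_A$ with $q_A\in\Pi_{n-1}$ a nonzero constant multiple of $p_{A,\Xset\setminus M}^\star$. Indeed, $p_{A,\Xset}^\star$ vanishes at the $n+1$ nodes of $M$, so $M$ divides it by Proposition~\ref{prp:n+1points}; the cofactor $q_A$ is nonzero, vanishes at every node of $\Xset\setminus M$ except $A$ (since $M$ is nonzero there), and $q_A(A)\neq0$, so it is the claimed fundamental polynomial, and then $\Xset\setminus M$ is $(n-1)$-poised by Proposition~\ref{prp:poised} together with $|\Xset\setminus M|=N-(n+1)=\dim\Pi_{n-1}$. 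Also note at the outset that $\ell\neq M_0$, because $|\ell\cap\Xset|\le n<n+1$, so in particular $\ell$ and $M_0$ are coprime as polynomials.

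Next I would establish the first assertion, $\Xset_{\ell}=(\Xset\setminus M_0)_{\ell}$. The key point is that no node of $M_0$ uses $\ell$: if $A\in M_0\cap\Xset$ had $p_{A,\Xset}^\star=\ell q$ with $q\in\Pi_{n-1}$, then $A\notin\ell$ (as $M_0\cap\ell\cap\Xset=\emptyset$), so $q(A)=p_{A,\Xset}^\star(A)/\ell(A)\neq0$; but $q$ vanishes at the $n$ nodes of $M_0$ other than $A$, and a polynomial of degree $\le n-1$ vanishing at $n$ points of the line $M_0$ vanishes identically on $M_0$, forcing $q(A)=0$, a contradiction. Since trivially no node lying on $\ell$ uses $\ell$, we get $\Xset_{\ell}\subseteq\Xset\setminus(M_0\cup\ell)$, and likewise $(\Xset\setminus M_0)_{\ell}\subseteq\Xset\setminus(M_0\cup\ell)$. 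For any $A$ in this common set, using $p_{A,\Xset}^\star=M_0 q_A$ and the coprimality of $\ell$ and $M_0$, $A$ uses $\ell$ for $\Xset$ iff $\ell\mid q_A$ iff $A$ uses $\ell$ for $\Xset\setminus M_0$; this yields $\Xset_{\ell}=(\Xset\setminus M_0)_{\ell}$.

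For the ``moreover'' part, assume $\ell$ is an $n$-node line of $\Xset$. Since $M_0\cap\ell\cap\Xset=\emptyset$, all $n$ nodes of $\ell\cap\Xset$ lie in $\Xset\setminus M_0$, which is $(n-1)$-poised and hence has at most $n$ collinear nodes; therefore $\ell$ is a maximal line of $\Xset\setminus M_0$. By the maximal-line property every node of $(\Xset\setminus M_0)\setminus\ell$ uses $\ell$, so $(\Xset\setminus M_0)_{\ell}=(\Xset\setminus M_0)\setminus\ell=\Xset\setminus(\ell\cup M_0)$, and combining with the first part gives $\Xset_{\ell}=\Xset\setminus(\ell\cup M_0)$. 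Finally, applying the preliminary observation to the $(n-1)$-poised set $\Xset\setminus M_0$ and its maximal line $\ell$ shows that $(\Xset\setminus M_0)\setminus\ell$ is $(n-2)$-poised, which is exactly $\Xset_{\ell}$. I do not expect a genuine obstacle here; the only points demanding care are that the ``delete a maximal line'' facts must be carried out for general $n$-poised sets (so Corollary~\ref{prp:minusmax}, stated for $GC_n$ sets, cannot be quoted verbatim), and the elementary divisibility bookkeeping in the step showing that no node of $M_0$ uses $\ell$, which hinges on the degree count for $q$ restricted to $M_0$.
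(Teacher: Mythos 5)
Your proof is correct. Note that the paper itself does not prove this lemma: it is quoted from Carnicer--Gasca \cite{CG03}, so there is no in-paper argument to compare against. Your derivation is a sound, self-contained reconstruction from the paper's stated tools. The general identity $\Xset_\ell\setminus M_0=(\Xset\setminus M_0)_\ell$ is already recorded in the paper as \eqref{rep} (and your divisibility bookkeeping with $p_{A,\Xset}^\star=M_0q_A$ and the coprimality of $\ell$ and $M_0$ is the right way to justify it for $n$-poised sets); the genuine content of the lemma beyond \eqref{rep} is exactly your key step, namely that under hypothesis \eqref{aaaa} no node of $M_0$ uses $\ell$, which you establish correctly by observing that the cofactor $q=p_{A,\Xset}^\star/\ell$ lies in $\Pi_{n-1}$, vanishes at the $n$ nodes of $M_0\cap\Xset$ other than $A$ (none of which lie on $\ell$), and hence is divisible by $M_0$ by the degree-$(n-1)$ case of Proposition \ref{prp:n+1points}, contradicting $q(A)\neq0$. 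The ``moreover'' part, via $\ell$ becoming a maximal line of the $(n-1)$-poised set $\Xset\setminus M_0$, is also handled correctly. You are right that Corollary \ref{prp:minusmax} as stated concerns $GC_n$ sets and that its $n$-poised analogue must be (and is, by your preliminary paragraph) verified separately.
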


\begin{lemma}[Carnicer, Gasca, \cite{CG03}]\label{lem:CG2}
Let $\Xset$ be an $n$-poised set and ${\ell}$ be a line with $|\ell\cap\Xset|\le n.$ Suppose also
that there are two maximal lines $M', M''$ such that
\begin{equation}\label{bbbb}  M' \cap M''\cap {\ell} \in \Xset.\end{equation}

Then we have that
\begin{equation}\label{bbb}
\Xset_{\ell} = {(\Xset \setminus (M' \cup M''))}_{\ell}.
\end{equation}
Moreover, if ${\ell}$ is an $n$-node line then we have that
$\Xset_{\ell} = \Xset \setminus ({\ell} \cup M' \cup M''),$ hence $\Xset_\ell$ is an $(n-3)$-poised set.
\end{lemma}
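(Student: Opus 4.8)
The plan is to follow the same strategy that underlies Lemma \ref{lem:CG1}, exploiting the two maximal lines one at a time. Set $O = M' \cap M'' \cap \ell$, which by hypothesis is a node of $\Xset$. First I would apply Corollary \ref{prp:minusmax} (or rather the $n$-poised analogue of relation \eqref{aaaaa}, which holds by Proposition \ref{prp:n+1points}) to peel off $M'$: since $M'$ is maximal, every node $A \in \Xset \setminus M'$ satisfies $p_{A,\Xset}^\star = M' \, p_{A, \Xset \setminus M'}^\star$, and $\Xset \setminus M'$ is $(n-1)$-poised. Hence for a node $A \notin M'$, $A$ uses $\ell$ in $\Xset$ if and only if $A$ uses $\ell$ in $\Xset \setminus M'$, \emph{unless} $\ell = M'$ — but $|\ell \cap \Xset| \le n < n+1$ rules that out. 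This gives $\Xset_\ell \setminus M' = (\Xset \setminus M')_\ell$, the analogue of \eqref{rep}. The subtlety is the nodes \emph{on} $M'$: I need to check that no node of $\ell \cap M' \cap \Xset$ other than $O$ interferes, and that $O$ itself, lying on $\ell$, is excluded from $\Xset_\ell$ by definition (a node on $\ell$ is never counted as using $\ell$). Since $|\ell \cap \Xset| \le n$, the line $\ell$ is not maximal, so $M' \cap \ell \cap \Xset = \{O\}$ (two maximal-or-not lines meet in one point, and that point is $O$); thus the only node of $M'$ lying on $\ell$ is $O$, and it contributes nothing to either side.

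Next I would repeat the argument inside the $(n-1)$-poised set $\Xset' := \Xset \setminus M'$. Here $M''$ is still a maximal line: it passed through $n+1$ nodes of $\Xset$, one of which was $O \in M'$, so it passes through exactly $n$ nodes of $\Xset'$, which is the maximal number for an $(n-1)$-poised set. Also $O \notin \Xset'$, so now $M'' \cap \ell \cap \Xset' = \emptyset$. Therefore $\Xset'$, the line $\ell$ (still with $|\ell \cap \Xset'| \le n$, in fact $\le n-1$ now that $O$ is removed), and the maximal line $M''$ of $\Xset'$ satisfy exactly the hypotheses of Lemma \ref{lem:CG1} with $M_0 = M''$. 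Applying that lemma gives $\Xset'_\ell = (\Xset' \setminus M'')_\ell = (\Xset \setminus (M' \cup M''))_\ell$. Combining with the previous paragraph yields $\Xset_\ell = (\Xset \setminus (M' \cup M''))_\ell$, which is \eqref{bbb}.

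For the final assertion, suppose $\ell$ is an $n$-node line, so $|\ell \cap \Xset| = n$ and one of these $n$ nodes is $O$. Then in $\Xset' = \Xset \setminus M'$ the line $\ell$ is an $(n-1)$-node line (it lost exactly $O$), and $M''$ is a maximal line of the $(n-1)$-poised set $\Xset'$ disjoint from $\ell$ on the nodes. By the "moreover" part of Lemma \ref{lem:CG1}, $\Xset'_\ell = \Xset' \setminus (\ell \cup M'')$ and this is an $((n-1)-2) = (n-3)$-poised set. Since $\Xset_\ell = \Xset'_\ell$ from the first step (no node of $M'$ other than $O$ is on $\ell$, and $O \in \ell$ is not in $\Xset_\ell$ anyway), we get $\Xset_\ell = \Xset \setminus (\ell \cup M' \cup M'')$: indeed $\Xset' \setminus (\ell \cup M'') = (\Xset \setminus M') \setminus (\ell \cup M'') = \Xset \setminus (M' \cup \ell \cup M'')$. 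Thus $\Xset_\ell$ is $(n-3)$-poised, completing the proof.

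The main obstacle, I expect, is bookkeeping the node $O$ correctly at each stage: verifying that it is the \emph{only} common node of $\ell$ with each of $M'$ and $M''$ (which needs $\ell$ non-maximal, hence $|\ell \cap \Xset| \le n$), that removing $M'$ drops the node count of both $\ell$ and $M''$ by exactly one, and that $O$'s membership in $\ell$ automatically keeps it out of every $\Xset_\ell$-type set. Once these incidences are pinned down, the reduction to Lemma \ref{lem:CG1} is immediate and the poisedness counts ($n \to n-1 \to n-3$, the line $\ell$ contributing the extra drop) follow mechanically.
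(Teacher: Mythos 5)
The paper itself gives no proof of this lemma (it is quoted from Carnicer--Gasca \cite{CG03}), so your argument can only be judged on its own terms. The reduction you set up is the natural one, and the bookkeeping of the node $O$ is handled correctly, but there is a genuine gap exactly at the place you flagged as ``the subtlety'': the nodes \emph{on} $M'$. Your first step yields only $\Xset_\ell\setminus M' = (\Xset\setminus M')_\ell$, and your second step (Lemma \ref{lem:CG1} applied in $\Xset\setminus M'$ with $M_0=M''$) upgrades the right-hand side to $(\Xset\setminus(M'\cup M''))_\ell$. Combining them gives $\Xset_\ell\setminus M' = (\Xset\setminus(M'\cup M''))_\ell$, not the asserted $\Xset_\ell=(\Xset\setminus(M'\cup M''))_\ell$. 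You verified that the unique node of $M'\cap\ell\cap\Xset$ is $O$ and that $O\notin\Xset_\ell$, but that says nothing about the $n$ nodes of $M'\cap\Xset$ that do \emph{not} lie on $\ell$; a priori some of them could use $\ell$, and the statement of the lemma (whose right-hand side is contained in $\Xset\setminus(M'\cup M'')$) asserts precisely that none of them does. Observe that Lemma \ref{lem:CG1} does the analogous work for $M_0$: its conclusion $\Xset_\ell=(\Xset\setminus M_0)_\ell$ already contains the claim that no node of $M_0$ uses $\ell$, and that is the part your argument never reproduces for $M'$.

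The gap is fixable within your own framework: repeat the two-step argument with the roles of $M'$ and $M''$ interchanged ($M'$ passes through $n$ nodes of the $(n-1)$-poised set $\Xset\setminus M''$, hence is maximal there, and meets $\ell$ in no node of that set, so Lemma \ref{lem:CG1} applies) to obtain $\Xset_\ell\setminus M''=(\Xset\setminus(M'\cup M''))_\ell$ as well. The two identities force $\Xset_\ell\cap(M'\cup M'')\subseteq M'\cap M''=\{O\}\subseteq\ell$, and since no node lying on $\ell$ belongs to $\Xset_\ell$, you conclude $\Xset_\ell\cap(M'\cup M'')=\emptyset$ and hence the full equality \eqref{bbb}. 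With that repaired, your derivation of the ``moreover'' part (an $(n-1)$-node line and a node-disjoint maximal line in the $(n-1)$-poised set $\Xset\setminus M'$, then the second clause of Lemma \ref{lem:CG1}) goes through.
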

Let us formulate the following useful (cf. \cite{BH}, Corollary 3.4)
\begin{corollary}\label{nor}
Let $\Xset$ be an $n$-poised set and ${\ell}$ be an $n$-node line.

\vspace{-1.5mm} \begin{enumerate} \setlength{\itemsep}{0mm}
\item
Suppose that  a maximal line $M_0$ satisfies the condition \eqref{aaaa}.
Then for any other maximal line $M,\ M\neq M_0,$ we have that $M\cap \Xset_\ell=n-1;$

\item
Suppose that   maximal lines $M'$ and $M''$ satisfy the condition \eqref{bbbb}. Then for any other maximal line $M,\ M\neq M', M\neq M'',$ we have that $M\cap \Xset_\ell=n-2.$

\end{enumerate}
\end{corollary}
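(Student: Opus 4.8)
The plan is to deduce both statements from the explicit descriptions of $\Xset_\ell$ provided by Lemmas~\ref{lem:CG1} and~\ref{lem:CG2}, combined with a crude incidence count bounded above by the poisedness of $\Xset_\ell$.

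For part (i), I would first invoke Lemma~\ref{lem:CG1}: since $\ell$ is an $n$-node line and the maximal line $M_0$ satisfies \eqref{aaaa}, that lemma yields $\Xset_\ell=\Xset\setminus(\ell\cup M_0)$, and this set is $(n-2)$-poised. Now fix a maximal line $M\neq M_0$; note also that $M\neq\ell$, since a maximal line carries $n+1$ nodes whereas $\ell$ carries only $n$. The nodes of $M$ lying in $\Xset_\ell$ are exactly the $n+1$ nodes of $M\cap\Xset$ with those on $\ell$ or on $M_0$ removed. As $M$ is distinct from each of $\ell$ and $M_0$, the sets $M\cap\ell$ and $M\cap M_0$ are single points, so at most two of the $n+1$ nodes of $M$ are removed; hence $|M\cap\Xset_\ell|\ge n-1$. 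For the reverse inequality I would use that $\Xset_\ell$ is $(n-2)$-poised, so by Proposition~\ref{prp:n+1points} it cannot contain $n$ collinear nodes, which forces $|M\cap\Xset_\ell|\le n-1$. Combining the two bounds gives $|M\cap\Xset_\ell|=n-1$.

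Part (ii) runs in exactly the same way. Lemma~\ref{lem:CG2}, applied to the $n$-node line $\ell$ and the maximal lines $M',M''$ satisfying \eqref{bbbb}, gives $\Xset_\ell=\Xset\setminus(\ell\cup M'\cup M'')$, an $(n-3)$-poised set. For a maximal line $M\neq M',M''$ (again $M\neq\ell$), the nodes of $M$ in $\Xset_\ell$ are the $n+1$ nodes of $M\cap\Xset$ minus those on $\ell\cup M'\cup M''$; since $M$ meets each of these three lines in at most one point, at most three of its nodes are removed, whence $|M\cap\Xset_\ell|\ge n-2$. On the other hand $\Xset_\ell$ being $(n-3)$-poised rules out $n-1$ collinear nodes, so $|M\cap\Xset_\ell|\le n-2$, and equality follows.

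The only point needing a little care is the lower bound in each case: one must observe that the number of nodes of $M$ removed is at most the number of \emph{distinct} intersection points of $M$ with the removed lines, and that $M$ being a line different from each removed line makes every such intersection a single point (or empty). One could alternatively pin down the removed nodes precisely — the intersection $M\cap M_0$ (resp.\ $M\cap M'$, $M\cap M''$) is a node by Corollary~\ref{properties}(i), and non-concurrency of maximal lines, Corollary~\ref{properties}(ii), keeps these nodes apart — but this sharper bookkeeping is not actually required. I expect no genuine obstacle; the substantive content is entirely carried by the two Carnicer–Gasca lemmas, which are available to us.
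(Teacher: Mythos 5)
Your proposal is correct. It starts exactly as the paper does, by invoking Lemmas~\ref{lem:CG1} and~\ref{lem:CG2} to get the explicit descriptions $\Xset_\ell=\Xset\setminus(\ell\cup M_0)$ and $\Xset_\ell=\Xset\setminus(\ell\cup M'\cup M'')$, but the counting step at the end is genuinely different. The paper computes $|M\cap\Xset_\ell|$ exactly: it argues that $M$ must intersect $\ell$ at a node (in case (i) because $M_0$ is determined uniquely by the relation $\Xset_\ell=\Xset\setminus(\ell\cup M_0)$, in case (ii) because conditions \eqref{aaaa} and \eqref{bbbb} cannot hold simultaneously) and intersects $M_0$ (resp.\ $M'$, $M''$) at further distinct nodes, so that exactly $2$ (resp.\ exactly $3$) of the $n+1$ nodes of $M$ are removed. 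You instead sandwich the count: the trivial observation that at most $2$ (resp.\ $3$) nodes of $M$ can be removed gives $|M\cap\Xset_\ell|\ge n-1$ (resp.\ $n-2$), and the $(n-2)$- (resp.\ $(n-3)$-) poisedness of $\Xset_\ell$, via the standard collinearity bound from Proposition~\ref{prp:n+1points}, gives the matching upper bound. Both arguments are sound and of comparable length; yours avoids having to prove that $M$ actually meets $\ell$ at a node, whereas the paper's exact count delivers that incidence fact as a by-product (and it is precisely this kind of incidence information that gets reused later, e.g.\ in Case~2 of the proof of Proposition~\ref{prp:n=3}).
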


\begin{proof}
The cases $n=1,2,$ are evident. Thus suppose that $n\ge 3.$
Assume that $M_0$ is a maximal line satisfying the condition \eqref{aaaa}.
Then, according to Lemma \ref{lem:CG1}, we have that
$\Xset_{\ell} = \Xset \setminus ({\ell} \cup M_0).$
Observe that $M_0$ is determined uniquely from this relation. Thus any other
maximal line $M$ intersects the line $\ell$ as well as $M_0$ at distinct nodes.
Therefore we get
$|M\cap \Xset_{\ell}| = |M\cap [\Xset\setminus ({\ell} \cup M_0)]| = (n+1)-2=n-1.$

Next assume that there are two
maximal lines $M'$ and $M''$ such that $M'\cap M''\cap {\ell} \in \Xset.$
Now, according to Lemma \ref{lem:CG2}, we have that $\Xset_{\ell} = \Xset
\setminus ({\ell} \cup M'\cup M'').$
Observe that any other maximal line $M$ has exactly $3$ nodes in the set ${\ell} \cup M'\cup M'').$ Indeed, $M$ intersects $\ell$ at a node since otherwise the conditions \eqref{aaaa} and \eqref{bbbb} hold simultaneously, which is a contradiction. Also, in view of  Corollary \ref{properties},
$M$ intersects $M'$ and $M''$ at two distinct nodes not belonging to $\ell.$ Therefore we get $|M\cap \Xset_{\ell}| =
|M\cap [\Xset\setminus ({\ell} \cup M'\cup M'')]| = (n+1)-3=n-2.$
\end{proof}

Finally, let us bring a result from \cite{BH} on $n$-node lines we are goming to use in the next section.
\begin{proposition}[Bayramyan, H., \cite{BH}]\label{prp:linennp}
Let $\Xset$ be an $n$-poised set and ${\ell}$ be a line passing through exactly $n$ nodes of $\Xset.$
Then the following hold:
\vspace{-1.5mm} \begin{enumerate} \setlength{\itemsep}{0mm}
\setlength{\itemsep}{0mm}
\item
$|\Xset_{\ell}| \leq \binom{n}{2};$
\item
If $|\Xset_{\ell}| \ge \binom{n-1}{2}+1$ then $|\Xset_{\ell}|=\binom{n}{2}.$ Moreover, $\Xset_{\ell}$ is an $(n-2)$-poised set and $\Xset_{\ell}=\Xset\setminus ({\ell}\cup M),$ where $M$ is a maximal line
such that $M \cap {\ell} \cap  \Xset =\emptyset;$

\item
If $\binom{n-1}{2}\geq |\Xset_{\ell}| \geq \binom {n-2}{2}+2$ then $|\Xset_{\ell}| = \binom{n-1}{2}.$
Moreover,  $\Xset_{\ell}$ is an $(n-3)$-poised set and $\Xset_{\ell}=\Xset\setminus ({\ell}\cup{\beta}),$ where ${\beta}\in \Pi_2$ is a conic such that $\Nset_{\ell}=({\beta}\setminus {\ell})\cap\Xset,$ and $|\Nset_{\ell}|=2n.$
Besides these $2n$ nodes the conic may contain at most one extra node, which necessarily belongs to ${\ell}.$
Furthermore, if the conic ${\beta}$ is reducible: ${\beta}={\ell}_1{\ell}_2$ then we have that $|{\ell}_i\cap(\Xset\setminus \ell)|=n,\ i=1,2.$
\end{enumerate}
\end{proposition}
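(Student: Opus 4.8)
\textbf{Proof plan for Proposition \ref{prp:linennp}.}
The plan is to analyze the set $\Nset_\ell = (\Xset\setminus\ell)\setminus\Xset_\ell$ and exploit Theorem \ref{thm:Nell}, which tells us that $\Nset_\ell$ is $(n-1)$-dependent whenever nonempty. Since $|\ell\cap\Xset| = n$, we have $|\Xset\setminus\ell| = N - n = \binom{n+1}{2}$ by \eqref{N=}, and the partition \eqref{xlnl} gives $|\Xset_\ell| + |\Nset_\ell| = \binom{n+1}{2}$. For part (i): if $\Nset_\ell = \emptyset$ then $\Xset_\ell = \Xset\setminus\ell$ would be $n$-poised on $\binom{n+1}{2}$ nodes, which is impossible since $\binom{n+1}{2} < N$; a sharper count is needed, so I would argue that $\Nset_\ell$ cannot be too small — in fact, by Theorem \ref{thm:2n+1} applied to suitable subsets, an $(n-1)$-dependent set that is "close to independent" forces $n+1$ of its nodes to be collinear, and collinearity interacts with the $n$-node line $\ell$. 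The cleanest route is: $\Xset\setminus\ell$ itself, if we could interpolate, would need a fundamental polynomial construction; instead, count dimensions. I would show $|\Xset_\ell|\le\binom{n}{2}$ by observing that restricting fundamental polynomials of nodes in $\Xset_\ell$ to factor out $\ell$ leaves polynomials in $\Pi_{n-1}$ vanishing on $\Xset_\ell\cup(\text{something})$, and $\dim\Pi_{n-1} = \binom{n+1}{2}$, but more precisely the nodes of $\ell$ impose conditions; the exact bound $\binom n2 = \binom{n+1}{2} - n$ comes from the $n$ nodes on $\ell$.

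For parts (ii) and (iii), the strategy is to bound $|\Nset_\ell| = \binom{n+1}{2} - |\Xset_\ell|$ from above and then invoke Theorem \ref{thm:Nell} together with the Eisenbud–Green–Harris theorem (Theorem \ref{thm:2n+1}). In case (ii), $|\Xset_\ell|\ge\binom{n-1}{2}+1$ forces $|\Nset_\ell| = \binom{n+1}{2} - |\Xset_\ell| \le \binom{n+1}{2} - \binom{n-1}{2} - 1 = 2n - 1$. If $\Nset_\ell\neq\emptyset$, then being $(n-1)$-dependent with at most $2n-1 \le 2(n-1)+1$ nodes, Theorem \ref{thm:2n+1} forces $n+1$ of its nodes to be collinear on some line $M'$; but $M'$ cannot be $\ell$ (those nodes avoid $\ell$), so $M'$ is a line with $n+1$ nodes of $\Xset$, i.e. a maximal line — and since its $n+1$ nodes lie in $\Nset_\ell\subseteq\Xset\setminus\ell$, we get $M'\cap\ell\cap\Xset = \emptyset$. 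Then Lemma \ref{lem:CG1} gives $\Xset_\ell = \Xset\setminus(\ell\cup M')$, which has exactly $N - n - (n+1) = \binom{n}{2}$ nodes and is $(n-2)$-poised; combined with $|\Xset_\ell|\ge\binom{n-1}{2}+1$ this pins down everything in (ii). Alternatively $\Nset_\ell=\emptyset$, but then $|\Xset_\ell| = \binom{n+1}{2} > \binom n2$, contradicting (i) — so $M'$ must exist. One must also rule out that the $n+1$ collinear nodes could include a node of $\ell$: they belong to $\Nset_\ell$ which by definition is disjoint from $\ell$.

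Case (iii) proceeds analogously but one dimension lower: $|\Xset_\ell|\le\binom{n-1}{2}$ rules out a maximal line disjoint from $\ell$ among $\Nset_\ell$ (else we'd be in case (ii)), and $|\Xset_\ell|\ge\binom{n-2}{2}+2$ forces $|\Nset_\ell|\le\binom{n+1}{2}-\binom{n-2}{2}-2 = 3n-3$. The key is to find a conic $\beta$: I would argue $\Nset_\ell$, being $(n-1)$-dependent, lies on a curve of degree $n-1$, but the finer structure requires passing to $\Xset\setminus M$ for a maximal line $M$ (which exists by the bound $\mu(\Xset)\ge 1$, indeed by Proposition \ref{prp:n+1points} at most $n+1$ collinear, and $n$-poised sets always have enough structure) and using \eqref{rep} to induct; the reducible-conic sub-case $\beta = \ell_1\ell_2$ then needs each $\ell_i$ to carry exactly $n$ nodes of $\Xset\setminus\ell$, which follows from maximality-type counting since $2n = |\Nset_\ell|$ splits as $n+n$ and a line with $>n$ such nodes plus intersection with $\ell$ would be maximal, landing us back in case (ii). \textbf{The main obstacle} I anticipate is case (iii): establishing that $\Nset_\ell$ lies on a \emph{conic} (degree exactly $2$) rather than merely on some degree-$(n-1)$ curve, and controlling the extra node and the $|\Nset_\ell| = 2n$ count precisely — this likely requires an inductive descent through maximal lines using \eqref{rep} and Corollary \ref{prp:minusmax}, reducing to a low-degree base case where the conic appears explicitly, and then lifting back up. Getting the exact cardinalities (rather than inequalities) will hinge on combining the upper bound from part (i) applied at each stage with the dependence/independence dichotomy of Theorem \ref{thm:Nell}.
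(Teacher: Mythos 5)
First, a point of context: the paper does not prove Proposition \ref{prp:linennp} at all --- it is imported verbatim from \cite{BH} --- so there is no in-paper proof to match your proposal against; I am judging the proposal on its own. Your part (ii) is essentially right and is the standard route: bound $|\Nset_\ell|$ by $2n-1$, apply Theorem \ref{thm:Nell} plus Theorem \ref{thm:2n+1} to extract $n+1$ collinear nodes of $\Nset_\ell$, observe the resulting maximal line misses $\ell\cap\Xset$, and finish with Lemma \ref{lem:CG1}. But part (i) is not proved. You miscount: $|\Xset\setminus\ell|=N-n=\binom{n+2}{2}-n=\binom{n+1}{2}+1$, not $\binom{n+1}{2}$. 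Your proposed contradiction for $\Nset_\ell=\emptyset$ (``$\Xset\setminus\ell$ would be $n$-poised'') is a non sequitur, and the subsequent ``dimension count'' is never made precise. With the corrected count the argument closes cleanly: each $A\in\Xset_\ell$ has the $(n-1)$-fundamental polynomial $p_A^\star/\ell$ with respect to $\Xset\setminus\ell$, so $\Xset_\ell$ is $(n-1)$-independent and $\Nset_\ell\ne\emptyset$; then Theorem \ref{thm:Nell} and Theorem \ref{thm:2n+1} force $|\Nset_\ell|\ge n+1$, whence $|\Xset_\ell|\le\binom{n+1}{2}+1-(n+1)=\binom{n}{2}$. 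Note that with your off-by-one count this same argument would yield the false bound $\binom{n}{2}-1$, so the error is not cosmetic.

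The substantive gap is part (iii), which you yourself flag as ``the main obstacle'' and then defer. Here $|\Nset_\ell|$ can be as large as $3n-4$, which exceeds the $2(n-1)+1$ range of Theorem \ref{thm:2n+1}, so the EGH dichotomy no longer applies and one needs a finer classification of $(n-1)$-dependent sets of up to roughly $3n$ nodes (a Cayley--Bacharach-type refinement: either $n+1$ collinear nodes or $2n$-plus nodes on a conic). That classification is the actual content of the [BH] proof, and nothing in your plan supplies it. Moreover, your proposed substitute --- inductive descent through maximal lines via \eqref{rep} and Corollary \ref{prp:minusmax} --- is not available in the stated generality: the proposition concerns arbitrary $n$-poised sets, and such sets need not possess any maximal line (the bound $\mu(\Xset)\ge 3$ is a theorem about $GC_n$ sets under the Gasca--Maeztu conjecture, not about $n$-poised sets, and Corollary \ref{prp:minusmax} is likewise stated for $GC_n$ sets). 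So as written, part (iii) --- the existence of the conic $\beta$, the exact count $|\Nset_\ell|=2n$, the ``at most one extra node on $\ell$'' clause, and the reducible case --- remains unestablished.
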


\section{On $n$-node lines in $GC_n$ sets\label{s:nlgcn}}

Now we are in a position to present the corrected version of the main result of the paper \cite{BH} by V. Bayramyan and H. H.:
\begin{theorem}\label{thm:corrected}
Assume that Conjecture \ref{conj:GM} holds for all degrees up to
$n$. Let $\Xset$ be a $GC_n$ set, $n \ge 1,\ n\neq 3,$ and ${\ell}$ be an $n$-node line. Then we have
that
\begin{equation} \label{2bin} |\Xset_{\ell}| = \binom{n}{2}\quad \hbox{or} \quad \binom{n-1}{2}.
\end{equation}
Moreover, the following hold:
\vspace{-1.5mm} \begin{enumerate} \setlength{\itemsep}{0mm}
\item
$|\Xset_{\ell}| = \binom{n}{2}$ if and only if there is a maximal line $M_0$ such that $M_0 \cap {\ell} \cap  \Xset =\emptyset.$ In this case
we have that $\Xset_{\ell}=\Xset\setminus ({\ell}\cup M_0).$ Hence it is a $GC_{n-2}$ set;

\item
$|\Xset_{\ell}| = \binom{n-1}{2}$ if and only if there are two maximal lines $M', M'',$ such that $M' \cap M'' \cap {\ell} \in \Xset.$
In this case we have that $\Xset_{\ell}=\Xset\setminus ({\ell}\cup M'\cup M'').$ Hence it is a $GC_{n-3}$ set.

\end{enumerate}
\end{theorem}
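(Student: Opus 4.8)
The plan is to deduce Theorem~\ref{thm:corrected} from Proposition~\ref{prp:linennp} together with the Gasca--Maeztu structure theory, by an induction on $n$ that uses a maximal line to drop the degree. For the base cases $n=1,2$ one checks directly (for $n=1$ the only line meeting $\Xset$ in $1$ node is unused and $\binom{0}{2}=0$; for $n=2$ a $2$-node line is a side of the triangle $\Xset$ and is used by the opposite vertex, so $|\Xset_\ell|=1=\binom{1}{2}$, while $\binom{2}{2}=1$ as well, and the two maximal-line conditions are checked by hand). The inductive step for $n\ge 4$ is the heart of the matter.

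First I would record the ``only if'' directions of (i) and (ii), which are immediate: if $M_0$ is a maximal line with $M_0\cap\ell\cap\Xset=\emptyset$ then Lemma~\ref{lem:CG1} gives $\Xset_\ell=\Xset\setminus(\ell\cup M_0)$, whence $|\Xset_\ell|=N-n-(n+1)=\binom{n}{2}$; and if $M',M''$ are maximal with $M'\cap M''\cap\ell\in\Xset$ then Lemma~\ref{lem:CG2} gives $\Xset_\ell=\Xset\setminus(\ell\cup M'\cup M'')$, whence $|\Xset_\ell|=N-n-((n+1)+(n+1)-2)=\binom{n-1}{2}$ (the $-2$ because $M'\cap M''$ and the common point on $\ell$ are already counted; one must be careful here that $M'\cap M''\in\ell$, so exactly one node is triply counted). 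Note these two situations are mutually exclusive by Corollary~\ref{properties}(ii), so at most one of (i), (ii) can occur.

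For the main implication: by Theorem~\ref{thm:CG} (using GM up to $n$) the set $\Xset$ has at least three maximal lines $M_1,M_2,M_3$. The key dichotomy is whether some maximal line $M_i$ misses $\ell$ at the nodes, i.e.\ $M_i\cap\ell\cap\Xset=\emptyset$. \emph{Case A: some $M_i$ misses $\ell$.} Then Lemma~\ref{lem:CG1} applies and we are in situation (i) with $M_0=M_i$; conclude $|\Xset_\ell|=\binom{n}{2}$ and $\Xset_\ell=\Xset\setminus(\ell\cup M_i)$, which is a $GC_{n-2}$ set by Corollary~\ref{prp:minusmax} applied twice (remove $M_i$, then note $\ell$ becomes a maximal line of the $GC_{n-1}$ set $\Xset\setminus M_i$ since $\ell$ now passes through all its points on $\ell$... actually $\ell$ has $n$ nodes in $\Xset$, hence still $n$ in $\Xset\setminus M_i$ since $M_i$ missed them, so $\ell$ is \emph{not} maximal in $\Xset\setminus M_i$; instead invoke the ``moreover'' part of Proposition~\ref{prp:linennp}(ii) directly, or argue that $\Xset\setminus(\ell\cup M_i)=(\Xset\setminus M_i)\setminus\ell$ is $(n-2)$-poised and $GC$). \emph{Case B: every maximal line meets $\ell$ at a node.} Since there are at least three maximal lines and $\ell$ has only $n$ nodes, if $n\ge 4$ we will try to force two of them, say $M',M''$, to meet $\ell$ at the \emph{same} node --- this is the delicate combinatorial point. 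If we can do that, Lemma~\ref{lem:CG2} puts us in situation (ii). If instead all maximal lines meet $\ell$ at pairwise distinct nodes, then I would bound $|\Xset_\ell|$ from below: using the induction hypothesis on $\Xset\setminus M$ for a maximal line $M$ (which is a $GC_{n-1}$ set by Corollary~\ref{prp:minusmax}, and $\ell$ meets it in $n-1$ nodes so $\ell$ is an $(n-1)$-node line there) together with the transfer relation \eqref{rep}, $\Xset_\ell\setminus M=(\Xset\setminus M)_\ell$, one gets $|\Xset_\ell|\ge |(\Xset\setminus M)_\ell|$, and $(\Xset\setminus M)_\ell$ is either $\binom{n-1}{2}$ or $\binom{n-2}{2}$ by induction. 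Combined with the upper bound $|\Xset_\ell|\le\binom{n}{2}$ from Proposition~\ref{prp:linennp}(i) and the gap statements in parts (ii)--(iii) of that proposition, the only escape from \eqref{2bin} would be $|\Xset_\ell|\le\binom{n-2}{2}+1$; I would rule this out by showing that the $n-1$ maximal lines of $\Xset\setminus M$ (there are at least $n-1$ by Proposition~\ref{prp:CG}, or we iterate) cannot all be ``absorbed'' correctly, i.e.\ reducing to the case where $\Xset\setminus M$ falls under situation (ii) with its two maximal lines meeting $\ell$ at a common node, and that common node is then a node of $\Xset$ lying on $\ell$ through which two maximal lines of $\Xset\setminus M$ pass --- these extend (or not) to maximal lines of $\Xset$, and a short case analysis on whether they do gives either (i) or (ii) for $\Xset$ itself.

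\textbf{Main obstacle.} The hard part is Case B with all maximal lines hitting $\ell$ at distinct nodes: showing that this configuration cannot produce the ``forbidden'' value $|\Xset_\ell|=\binom{n-2}{2}+1$ (the $n=3$ counterexample is precisely where an analogous value, $0=\binom{1}{2}-1$, does occur, so the argument \emph{must} use $n\ge 4$ essentially). I expect to handle it by descending along two maximal lines at once when possible and otherwise exploiting that with $n\ge 4$ there is enough room on $\ell$ — but enough maximal lines (at least three, and after removing one still at least two by Proposition~\ref{prp:CG}) — to locate a pair with a common intersection point on $\ell$, possibly after passing to $\Xset\setminus M$. Making this counting airtight, and correctly tracking which maximal lines of the sub-lattice $\Xset\setminus M$ lift to maximal lines of $\Xset$, is where the real work lies; the rest is bookkeeping with the cardinality identity $N=\binom{n+2}{2}$ and Corollaries~\ref{prp:minusmax} and \ref{properties}.
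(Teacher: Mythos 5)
There is a genuine gap, and it sits exactly at the point the whole paper is about. You set up an induction with base cases $n=1,2$ and declare the inductive step for $n\ge 4$ to be the heart of the matter, but as structured your induction must pass through $n=3$: the step for $n=4$ applies the hypothesis to the $GC_3$ set $\Xset\setminus M$, and for a $3$-node line in a $GC_3$ set the dichotomy $|\,(\Xset\setminus M)_\ell|\in\{\binom{3}{2},\binom{2}{2}\}$ is \emph{false} --- that is precisely the counterexample (a $3$-node line can be used by $0$ nodes). So the lower bound you want to extract from $\Xset\setminus M$ simply is not available at $n=4$, and the case $n=4$ needs its own proof. In the paper this occupies most of the work: one first establishes the full trichotomy $|\Xset_\ell|\in\{3,1,0\}$ for $GC_3$ sets together with the ``exactly three maximal lines $\Rightarrow$ $3$ or $0$'' refinement (Proposition \ref{prp:n=3}), proves that $n$ maximal lines through $n$ distinct nodes of $\ell$ force an $(n{+}1)$-st maximal line (Proposition \ref{prp:nmaximals}, via the Carnicer--Gasca characterization \eqref{01O}), and then, in the hardest subcase of exactly three maximal lines all meeting $\ell$ at distinct nodes, runs a three-step argument (some node uses $\ell$; then at least $5$ do; then $\Nset_\ell$ has at most $6$ points, is $3$-dependent by Theorem \ref{thm:Nell}, so by Theorem \ref{thm:2n+1} five of them are collinear, producing a fourth maximal line and a contradiction). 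None of this machinery appears in your proposal.

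For $n\ge 5$ your outline is closer to the paper's, but two quantitative steps are left as hopes rather than proofs. First, the bound $|\Xset_\ell|\ge|(\Xset\setminus M)_\ell|$ is not enough; one needs a maximal line $M$ with $|M\cap\Xset_\ell|\ge 2$, obtained in the paper from Corollary \ref{nor} applied inside $\Xset\setminus M_1$ together with the induction hypothesis (this is where $n\ge 5$, hence $(n-1)-2\ge 2$, is used), which upgrades the bound to $|\Xset_\ell|\ge\binom{n-2}{2}+2$ and lets Proposition \ref{prp:linennp}(iii) bite. Second, having landed in case (iii) of that proposition you still must produce the two maximal lines $M',M''$ concurrent with $\ell$ at a node: the paper does this by showing $|(\Xset\setminus M)_\ell|=\binom{n-2}{2}$, locating $2(n-1)$ nodes of $\Nset_\ell$ on two maximal lines of $\Xset\setminus M$, and arguing that each divides the conic $\beta$ (each carries at least $4\ge 3$ points of $\beta$), so $\beta=M'M''$ and both factors are maximal in $\Xset$ itself. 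Your ``short case analysis on whether they lift'' gestures at this but does not supply the dividing argument or the count that makes it work.
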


In \cite{BH} this result is stated without the restriction $n\ne 3.$

In the next subsection we show that the statement of Theorem \ref{thm:corrected} is not correct in the case $n=3.$

\subsection{The counterexample \label{count}}

Consider a $GC_3$ set $\Xset^*$ with exactly three maximal lines: $M_1,M_2,M_3$ (see Fig. \ref{pic9}).
We have that in such sets nine nodes are lying in the maximal lines and one - $O$ is outside of them.  Also there are three $3$-node lines $\ell_1,\ell_2,\ell_3$ passing through the node $O$ (see Fig. \ref{pic2} and the Case 3 of the proof of Proposition \ref{prp:n=3}, below).  In the  set $\Xset^*$ we have also a fourth $3$-node line: $\ell^*$ which is not passing through the node $O.$

As we will see in the next proposition such a line cannot be used by any node in $\Xset.$ It is worth mentioning that this could also be verified directly.

\begin{figure}[ht] 
\centering
\includegraphics[scale=0.4]{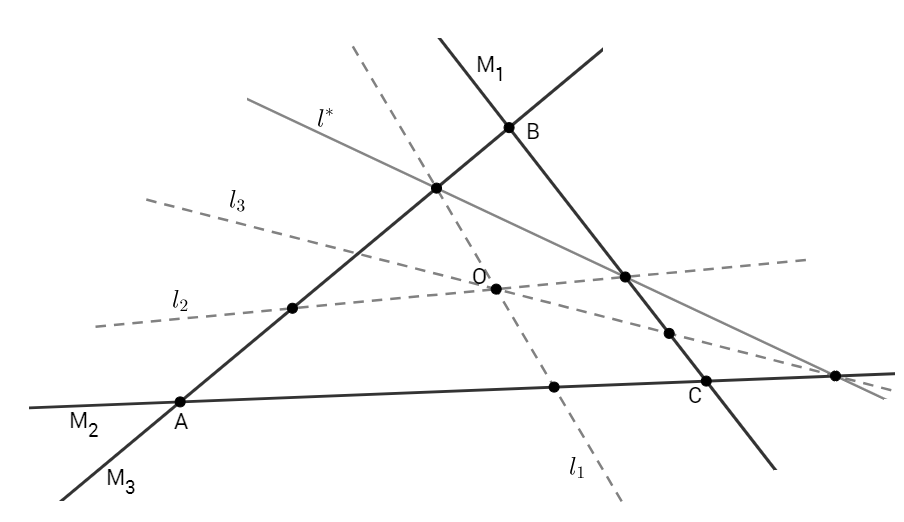}
\caption{A non-used $3$-node line in the $GS_3$ set $\Xset^*.$}\label{pic9}
\end{figure}

Before starting the proof of Theorem \ref {thm:corrected} let us
characterize the exclusive case $n=3.$

\subsection{On $3$-node lines in $GC_3$ sets\label{s:nlgcn}}

\begin{proposition}\label{prp:n=3}
Let $\Xset$ be a $GC_3$ set and ${\ell}$ be a $3$-node line. Then we have
that
\begin{equation} \label{2bin} |\Xset_{\ell}| = 3,\quad 1,\quad\hbox{or} \quad 0.
\end{equation}
Moreover, the following hold:
\vspace{-1.5mm} \begin{enumerate} \setlength{\itemsep}{0mm}
\item
$|\Xset_{\ell}| = 3$ if and only if there is a maximal line $M_0$ such that $M_0 \cap {\ell} \cap  \Xset =\emptyset.$ In this case
we have that $\Xset_{\ell}=\Xset\setminus ({\ell}\cup M_0).$ Hence it is a $GC_{1}$ set. All other maximal lines $M,\ M\neq M_0,$ intersect the line $\ell$ at a node and $M\cap\Xset_\ell$=2;

\item
$|\Xset_{\ell}| = 1$ if and only if there are two maximal lines $M', M'',$ such that $M' \cap M'' \cap {\ell} \in \Xset.$
In this case we have that $\Xset_{\ell}=\Xset\setminus ({\ell}\cup M'\cup M'');$

\item $|\Xset_{\ell}| = 0$ if and only if there are exactly three maximal lines in $\Xset$ and they intersect $\ell$ at three distinct nodes.

\end{enumerate}

Furthermore, if the node set $\Xset$ possesses exactly three maximal lines then any $3$-node line $\ell$ is used either by exactly three nodes or is not used at all:
\begin{equation} \label{1bin} |\Xset_{\ell}| = 3\quad\hbox{or} \quad 0.
\end{equation}

\end{proposition}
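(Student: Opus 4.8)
The plan is to prove Proposition \ref{prp:n=3} by first establishing the trichotomy \eqref{2bin} together with the characterizations (i)--(iii), and then deriving the final statement \eqref{1bin} as a consequence of (iii) and Theorem \ref{thm:CG}. For the main trichotomy I would start from Proposition \ref{prp:linennp} applied with $n=3$: part (i) gives $|\Xset_\ell|\le\binom{3}{2}=3$; part (ii) with the bound $\binom{n-1}{2}+1=2$ forces $|\Xset_\ell|=3$ together with the description $\Xset_\ell=\Xset\setminus(\ell\cup M_0)$ and $M_0\cap\ell\cap\Xset=\emptyset$, which is exactly clause (i); and part (iii), which in degree $3$ reads $\binom{n-1}{2}=1\ge|\Xset_\ell|\ge\binom{n-2}{2}+2=2$, has an empty range, so no value is allowed in the "middle" regime. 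Hence a priori $|\Xset_\ell|\in\{3,2,1,0\}$, and I must rule out the value $2$ and characterize the values $1$ and $0$.

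To rule out $|\Xset_\ell|=2$: a $GC_3$ set has at least three maximal lines by Theorem \ref{thm:CG} (the GM conjecture is known for $n\le 5$, in particular $n=3$). Pick two distinct maximal lines $M_1,M_2$. If some maximal line $M_i$ misses $\ell\cap\Xset$, we are in the situation of Lemma \ref{lem:CG1}: since $\ell$ is a $3$-node line, $\Xset_\ell=\Xset\setminus(\ell\cup M_i)$, so $|\Xset_\ell|=10-3-4+(\text{overlap})$; counting carefully, $|\Xset|=10$, $|\ell\cap\Xset|=3$, $|M_i\cap\Xset|=4$, and $M_i\cap\ell\cap\Xset=\emptyset$ gives $|\Xset_\ell|=10-3-4=3$. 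If instead every maximal line meets $\ell$ at a node, then since there are at least three maximal lines and $\ell$ has only three nodes, either two maximal lines meet $\ell$ at the \emph{same} node---the hypothesis of Lemma \ref{lem:CG2}, giving $\Xset_\ell=\Xset\setminus(\ell\cup M'\cup M'')$ with $M'\cap M''\cap\ell\in\Xset$, i.e. clause (ii)---or the three maximal lines meet $\ell$ at three \emph{distinct} nodes. In the latter case I would argue that $\Xset$ has \emph{exactly} three maximal lines (a fourth maximal line would again have to hit one of the three nodes of $\ell$, producing a concurrence of three maximal lines, contradicting Corollary \ref{properties}(ii)), and then show $|\Xset_\ell|=0$: any node using $\ell$ would yield a fundamental polynomial $\ell q$ with $q\in\Pi_2$, and the combinatorics of a $GC_3$ set with three maximal lines (nine nodes on the $M_i$, one outside node $O$, as recalled in the counterexample subsection) forces every node's three used lines to be among a controlled list; a direct check---or invoking Lemma \ref{lem:CG2}-type reasoning on the configuration---shows $\ell$ cannot occur. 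In the remaining sub-case where $|\Xset_\ell|=1$, I would show it must come from Lemma \ref{lem:CG2}: compute $|\Xset\setminus(\ell\cup M'\cup M'')|$; with $|M'\cap M''\cap\ell\cap\Xset|=1$ the inclusion--exclusion gives $10-3-4-4+1+1+1-1=1$, matching, and conversely $|\Xset_\ell|=1$ cannot arise from a missing maximal line (that forced value $3$) nor from three distinct intersections (that forces $0$), so it must be the two-coincident-maximal-lines case, giving (ii); one also checks that the converse directions of (i), (ii), (iii) follow immediately from Lemmas \ref{lem:CG1}, \ref{lem:CG2} and the cardinality counts just performed.

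Finally, for \eqref{1bin}: if $\Xset$ has exactly three maximal lines $M_1,M_2,M_3$ and $\ell$ is any $3$-node line, the three maximal lines intersect $\ell$ in a total of three incidences (each $M_i\cap\Xset$ meets $\ell$ in at most one node, and cannot be disjoint from $\ell$-nodes AND have a coincidence simultaneously). By Corollary \ref{properties}(ii) no node of $\ell$ lies on two of the $M_i$, so either the three intersection points are distinct---case (iii), giving $|\Xset_\ell|=0$---or at least one node of $\ell$ lies on none of the $M_i$; but then some $M_i$ misses all three nodes of $\ell$ is impossible only if the three incidences land on at most two nodes of $\ell$, forcing a coincidence---contradicting Corollary \ref{properties}(ii). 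Hence the three incidences are exactly the three nodes of $\ell$, one per maximal line, and we are in case (iii), so $|\Xset_\ell|=0$. The only way to instead be in case (i) is that some $M_i$ has $M_i\cap\ell\cap\Xset=\emptyset$, which forces $|\Xset_\ell|=3$. Since cases (ii) requires a coincidence of two maximal lines on $\ell$---excluded by Corollary \ref{properties}(ii) when combined with having exactly three maximals all meeting $\ell$---the value $1$ is impossible, leaving only $|\Xset_\ell|\in\{3,0\}$.

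\textbf{Main obstacle.} The delicate step is the case where all (at least three) maximal lines meet the $3$-node line $\ell$ in distinct nodes: one must simultaneously argue that there are then \emph{exactly} three maximal lines and that $\ell$ is used by \emph{no} node. I expect this to require a hands-on analysis of the structure of $GC_3$ sets with three maximal lines---the nine boundary nodes, the outside node $O$, and the resulting fundamental polynomials---rather than a purely formal deduction from the cited lemmas, precisely because this is the regime where the $n=3$ exception to Theorem \ref{thm:corrected} lives and where Proposition \ref{prp:linennp}(iii) gives no information (its cardinality window is empty). Everything else is inclusion--exclusion bookkeeping anchored by Lemmas \ref{lem:CG1} and \ref{lem:CG2} and Corollary \ref{properties}.
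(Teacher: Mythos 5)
Your skeleton is sound and close in spirit to the paper's: the converse directions of (i) and (ii) come from Lemmas \ref{lem:CG1} and \ref{lem:CG2}, the value $2$ is excluded via Proposition \ref{prp:linennp}(ii), and the whole difficulty is concentrated in the configuration where all maximal lines meet $\ell$ at distinct nodes. But two genuine gaps remain, and they are exactly where the actual mathematical content of this proposition lives.

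First, the central claim --- that a $3$-node line met by the three maximal lines at three distinct nodes is used by \emph{no} node --- is never proved; you explicitly defer it to ``a direct check'' and flag it as the main obstacle. This is not bookkeeping. The paper's Case 3 carries it out by invoking the Carnicer--Gasca structure (Proposition \ref{prp:nmax} with $n=3$): the six non-vertex, non-outside nodes lie in pairs on three lines $\ell_1,\ell_2,\ell_3$ concurrent at the outside node $O$, each $\ell_i$ is a $3$-node line missed by the maximal line $M_i$, and one then enumerates \emph{all} lines used by \emph{all} ten nodes (the three maximal lines, the three $\ell_i$, and three $2$-node lines $\ell_i'$ used by the vertices). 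Only after this exhaustive enumeration can one conclude that a $3$-node line such as $\ell^*$ in Fig.~\ref{pic9} --- which necessarily picks one free node from each $M_i$ --- appears in no fundamental polynomial. Without some version of this enumeration (or an equivalent argument), assertion (iii) and hence the trichotomy are not established.

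Second, your argument for the ``Furthermore'' part contains an actual error: you claim that ``by Corollary \ref{properties}(ii) no node of $\ell$ lies on two of the $M_i$.'' Corollary \ref{properties}(ii) forbids three \emph{maximal} lines from being concurrent; it says nothing about a node lying on two maximal lines --- indeed the three vertices $A,B,C$ of the triangle each lie on exactly two of the $M_i$. What must actually be shown is that no $3$-node line passes through a vertex. That is again a structural fact: a line through $A=M_1\cap M_2$ other than $M_1,M_2$ meets $M_1\cup M_2$ only at $A$, so its remaining nodes lie on $M_3\cup\{O\}$, hence it contains at most one free node of $M_3$ plus possibly $O$; and if it contained both $O$ and a free node $F$ it would coincide with the line $\ell_i$ through $O$ and $F$, forcing $\ell_i$ to be a fourth maximal line --- contradiction. (A similar slip occurs earlier, where you invoke Corollary \ref{properties}(ii) to exclude a fourth maximal line through a node of $\ell$: two maximal lines and the non-maximal $\ell$ concurrent at a node is not a forbidden concurrence; the correct observation is that this situation is just the hypothesis of Lemma \ref{lem:CG2}, i.e., case (ii).) Your inclusion--exclusion counts and the use of Proposition \ref{prp:linennp} to eliminate the value $2$ are fine.
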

\begin{proof}
 First notice that the converse implications in the assertions (i) and (ii)
follow from Lemmas \ref{lem:CG1} and \ref{lem:CG2}, respectively.

The proof of the assertion (iii) and the direct implications (i) and (ii) we divide into cases depending on the number of maximal lines in $\Xset.$ Recall that according to the relation \eqref{3,n+2} the number of maximal
lines is not greater than $5$ and is not less than $3.$

Case 1. Suppose that there are exactly $5$ maximal lines in $\Xset.$ Notice that this is the case of Chung-Yao lattice.
In this case there is no $3$-node line. Indeed, through any node there pass two maximal lines in the Chung-Yao lattice.
Suppose conversely that there is a $3$-node line. Then we would have $6=3\times 2$ maximal lines passing through the $3$-nodes of $\ell$, which is a contradiction.

Case 2. Suppose that there are exactly $4$ maximal lines in $\Xset.$ Notice that this is the case of Carnicer-Gasca lattice.
The line $\ell$ is a $3$-node line. Therefore either  there is a maximal line $M$ such that $M \cap {\ell} \cap  \Xset =\emptyset$
or there are two maximal lines $M', M'',$ such that $M' \cap M'' \cap {\ell} \in \Xset.$ Thus the result holds in this case since the converse implications in the assertions (i) and (ii) are valid. Consider the case (i) when $\Xset_{\ell}=\Xset\setminus ({\ell}\cup M_0).$ Let $M,\ M\neq M_0,$ be a maximal line. Then $M$ intersects the line $\ell$ at a node, since otherwise, if $M\cap\ell \notin\Xset,$ in view of Lemma \ref{lem:CG1}, we would have that $M\cap \Xset_\ell=\emptyset$ and therefore $M\cap\Xset\subset {\ell}\cup M_0,$ which is a contradiction.
Then $M$ intersects $\ell$ and $M_0$ at two distinct nodes and the remaining two nodes of $M$ use the line $\ell,$ i.e., $|M\cap\Xset_\ell|=4-2=2.$

Case 3. Suppose that there are exactly $3$ maximal lines in $\Xset:\ M_1,M_2,M_3$  (see Fig. \ref{pic2}).

\begin{figure}[ht] 
\centering
\includegraphics[scale=0.4]{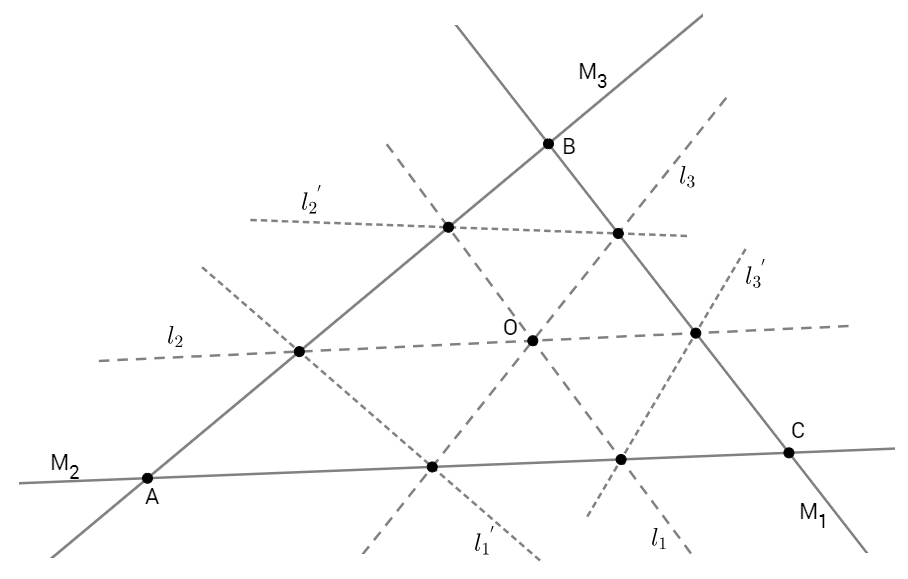}
\caption{The case of $GC_3$ set with exactly three maximal lines.}\label{pic2}
\end{figure}
By
Corollary \ref{properties} these lines form a triangle
and the vertices $A,B,C,$ are nodes in $\Xset.$  There are $6\ (=3\times 2)$
more nodes, called "free", $2$ in each maximal line. The tenth node - $O$
is outside of the maximal lines.
We find
readily that the $6$ "free" nodes are located also in $3$ lines: $\ell_1,\ell_2,\ell_3,$ passing
through $O,$ $2$ in each line (see Fig. \ref{pic2}).

To prove the result in this case it suffices to verify the following assertions:
\renewcommand{\theenumi}{\alph{enumi}}
\begin{enumerate}
\item
The lines $\ell_1,\ell_2,\ell_3,$ are $3$-node lines;

\item
For each line $\ell_1,\ell_2,\ell_3,$ there is a maximal line in $\Xset$ which does not intersect it at a node;

\item
Each of the lines $\ell_1,\ell_2,\ell_3,$ is used by exactly $3$-nodes in $\Xset;$

\item
Except of the lines $\ell_1,\ell_2,\ell_3,$ there is no other used $3$-node line in $\Xset.$

\item
Through each node of any non-used $3$-node line there pass a maximal line.

\end{enumerate}

Let us start the verification.

\begin{enumerate}
\item
Note that the lines $\ell_1,\ell_2,\ell_3,$ are $3$-node lines, i.e., they do not contain any more nodes, except $O$ and intersection nodes with two maximal lines. Indeed, otherwise they would become
a maximal line and make the number of maximal lines of $\Xset$ more than three.

\item
In view of (a) we get readily that the maximal line $M_i$ does not intersect the line $\ell_i$ at a node, $i=1,2,3.$

\item
In view of (b) and converse implication of the statement (i) we get readily that the line $\ell_i$ is used by the $3$-nodes of the set $\Xset\setminus ( \ell_i\cup M_i), \ i=1,2,3.$

\item
To verify this item let us specify all the used lines in $\Xset$ and see that except of the lines $\ell_1,\ell_2,\ell_3,$ there is no other used $3$-node line in $\Xset.$

First notice that all the nodes of the set $\Xset,$ except the vertices $A, B, C,$ use only the maximal lines and the
three $3$-node lines $\ell_1,\ell_2,\ell_3$ (see Fig. \ref{pic2}).

Then observe that each of the vertices $A, B, C,$ uses, except a maximal line and a $3$-node line $\ell_1,\ell_2,\ell_3$ also
a $2$-node line. Namely, these vertices use the lines  $\ell_1',\ell_2',\ell_3',$ respectively (see Fig. \ref{pic2}). Note that the latter lines infact are $2$-node lines. Indeed, say the line $\ell_1'$ obviously does not pass
through any more nodes from the two maximal lines that pass through the vertex $A$, since each of these maximal lines intersects $\ell_1'$ at one of its two nodes. The line $\ell_1'$ does not pass also through the remaining three nodes. Indeed, they belong to the lines $\ell_2,\ell_3$ that intersect the line $\ell_1'$ at one of its two nodes.

\item
Suppose that there is a $3$-node line different from $\ell_1,\ell_2,\ell_3$  (as $\ell^*$ in Fig. \ref{pic9}). Then clearly it is not passing through the vertices or the node $O.$ Thus the three nodes of $\ell$ belong to the maximal lines $M_i, \ i=1,2,3,$ one node to each.  Note that such a $3$-node line is not used
 since it is not among the used lines  we specified in the item (d).
\end{enumerate}

Finally, for the part ``Furthermore'', it suffices to observe that the case (ii), i.e., $|\Xset_\ell|=1,$ cannot happen if the node set $\Xset$ has exactly three maximal lines. Indeed, as it was mentioned in the item (e), it is easily seen that there cannot be a  $3$-node line passing through any intersection node of maximal lines, i.e., through $A,B,C$  (see Fig. \ref{pic2}).
\end{proof}

\section{The proof of Theorem \ref{thm:corrected}}

The original version of  Theorem \ref{thm:corrected}, i.e., the version without the restriction $n\ne 3,$ was proved in \cite{BH} by induction on $n.$ As the first step of the induction the case $n=3$ was used. We have already verified that in this case the statement is not valid. Thus to prove Theorem \ref{thm:corrected} first we need to prove it in the case $n=4.$ Note that the cases $n=1,2,$ are obvious (see \cite{BH}).

\subsection{The proof of Theorem \ref{thm:corrected} for the case $n=4$}
Consider the special case $n=4$ of Theorem \ref{thm:corrected}:
\begin{proposition}\label{prp:n=4}
Let $\Xset$ be a $GC_4$ set and ${\ell}$ be a $4$-node line. Then we have
that
\begin{equation} \label{2bin} |\Xset_{\ell}| = 6\quad \hbox{or} \quad 3.
\end{equation}
Moreover, the following hold:
\vspace{-1.5mm} \begin{enumerate} \setlength{\itemsep}{0mm}
\item
$|\Xset_{\ell}| = 6$ if and only if there is a maximal line $M_0$ such that $M_0 \cap {\ell} \cap  \Xset =\emptyset.$ In this case
we have that $\Xset_{\ell}=\Xset\setminus ({\ell}\cup M_0).$ Hence it is a $GC_{2}$ set;
\item
$|\Xset_{\ell}| = 3$ if and only if there are two maximal lines $M', M'',$ such that $M' \cap M'' \cap {\ell} \in \Xset.$
In this case we have that $\Xset_{\ell}=\Xset\setminus ({\ell}\cup M'\cup M'').$ Hence it is a $GC_{1}$ set.
\end{enumerate}
\end{proposition}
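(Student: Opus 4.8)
\textbf{Proof proposal for Proposition \ref{prp:n=4}.}

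The plan is to prove the statement by a careful case analysis on the number of maximal lines of $\Xset$, using the machinery of the $\Nset_\ell$ and $\Xset_\ell$ sets together with Proposition \ref{prp:linennp} as the main quantitative engine. First I would observe that by \eqref{3,n+2} (which applies since the GM conjecture is known for $n\le 5$, hence certainly for all $k\le 4$), we have $3\le \mu(\Xset)\le 6$. The converse implications in (i) and (ii) are immediate from Lemmas \ref{lem:CG1} and \ref{lem:CG2} respectively — each produces $\Xset_\ell=\Xset\setminus(\ell\cup M_0)$ of size $\binom{4}{2}-? = 6$ in case (i) and $\Xset\setminus(\ell\cup M'\cup M'')$ of size $3$ in case (ii), and the poisedness/$GC$ claims follow from those lemmas together with Corollary \ref{prp:minusmax} applied iteratively. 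So the real content is the dichotomy \eqref{2bin} and the direct implications. The key reduction is this: I want to show that for any $4$-node line $\ell$ in a $GC_4$ set, either some maximal line $M_0$ satisfies $M_0\cap\ell\cap\Xset=\emptyset$, or some pair of maximal lines $M',M''$ satisfies $M'\cap M''\cap\ell\in\Xset$; once that dichotomy is established, Lemmas \ref{lem:CG1}/\ref{lem:CG2} finish everything, and Proposition \ref{prp:linennp}(i)--(iii) rules out any size other than $\binom{4}{2}=6$ or $\binom{3}{2}=3$ (note $\binom{4-2}{2}+2 = 3$, so the intermediate range in part (iii) collapses to the single value $3$, and the value $\binom{n-2}{2}+1=2$ as well as smaller values must be excluded).

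The heart of the argument is combinatorial. By Theorem \ref{thm:CG} there are at least three maximal lines $M_1,M_2,M_3$; by Corollary \ref{properties}(ii) they are not concurrent and by (i) they meet pairwise at nodes, so they form a triangle. Each of $M_1,M_2,M_3$ meets $\ell$ in a point; if any one of these intersection points fails to be a node of $\Xset$, we are in case (i) and done. Otherwise all three meet $\ell$ at nodes of $\Xset$; since $\ell$ has only $4$ nodes, by pigeonhole at least two of these three nodes coincide — but the intersection point of $M_i$ and $M_j$ is the triangle vertex $A_{ij}$, which is a single point, so ``coincide'' means two of the lines, say $M_1$ and $M_2$, meet $\ell$ at the common node $M_1\cap M_2\cap\ell=A_{12}\in\Xset$, putting us in case (ii). Thus with exactly three maximal lines we are always in case (i) or (ii). For $\mu(\Xset)=4$ (Carnicer--Gasca lattice) and $\mu(\Xset)=5$ one argues similarly: a $4$-node line forces, by counting how many of the $\mu$ maximal lines can hit $\ell$ at its $4$ nodes, that either some maximal line avoids $\ell\cap\Xset$ or two of them share a node on $\ell$; and for $\mu(\Xset)=6$ (Chung--Yao lattice) I would show no $4$-node line exists at all, exactly as in Case 1 of the proof of Proposition \ref{prp:n=3}: through every node pass two of the six maximal lines, so a $4$-node line would force $4\times 2 = 8 > 6$ maximal lines through its nodes, a contradiction — unless some pair of these maximal-line-pairs overlaps, which again lands us in case (ii), so in any event the proposition holds.

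The step I expect to be the main obstacle is ruling out the ``middle'' values of $|\Xset_\ell|$ — specifically showing $|\Xset_\ell|$ cannot equal $4$ or $5$ (between $3$ and $6$) and cannot equal $1$ or $2$ (below $3$) — in a way that does not simply re-invoke the (flawed) inductive scheme of \cite{BH}. Here Proposition \ref{prp:linennp} does most of the work: part (i) caps $|\Xset_\ell|$ at $6$, part (ii) forces the jump to $6$ as soon as $|\Xset_\ell|\ge \binom{3}{2}+1=4$, and part (iii) forces $|\Xset_\ell|=3$ as soon as $\binom{3}{2}=3\ge|\Xset_\ell|\ge\binom{2}{2}+2=3$, i.e. whenever $|\Xset_\ell|=3$ exactly — so the only gap left open by Proposition \ref{prp:linennp} alone is $|\Xset_\ell|\in\{0,1,2\}$. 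To close this I would argue that when $\mu(\Xset)\ge 3$ and $\ell$ is a $4$-node line, at least one node of $\Xset\setminus\ell$ must use $\ell$: using the triangle $M_1,M_2,M_3$ and the fact (from Corollary \ref{properties}(ii)) that no three maximal lines are concurrent, one checks that the configuration hypotheses of Lemma \ref{lem:CG1} or \ref{lem:CG2} are met as shown above, and both lemmas yield $\Xset_\ell$ of size exactly $6$ or $3$. Hence the values $0,1,2$ never occur for $n=4$, which is precisely the phenomenon that fails for $n=3$ (where a triangle of maximal lines can meet a $4$th $3$-node line $\ell^*$ at three \emph{distinct} nodes, exhausting all three nodes of $\ell^*$ and leaving no room for either lemma to apply); the dimension count $4>3$ is exactly what makes the pigeonhole collision in the three-maximal-lines case unavoidable when $n=4$.
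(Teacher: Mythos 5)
Your reduction to the dichotomy ``some maximal line misses $\ell\cap\Xset$, or two maximal lines meet on $\ell$ at a node'' is the right frame, and your handling of the cases $\mu(\Xset)=6$ and $\mu(\Xset)=5$ matches the paper (for $\mu=5$ the pigeonhole is genuine: five maximal lines cannot meet a $4$-node line at five distinct nodes). But the central step of your argument is false, and it fails exactly where all the work of the actual proof lies. You claim that when the three maximal lines $M_1,M_2,M_3$ all meet $\ell$ at nodes of $\Xset$, ``since $\ell$ has only $4$ nodes, by pigeonhole at least two of these three nodes coincide.'' Three points distributed among four available nodes force no collision: the three maximal lines can perfectly well meet $\ell$ at three \emph{distinct} nodes $A_1,A_2,A_3$, leaving a fourth node $A_4$ of $\ell$ on no maximal line (this is precisely the configuration of Fig.~\ref{pic12}). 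In that configuration neither Lemma \ref{lem:CG1} nor Lemma \ref{lem:CG2} applies, so your claim that ``the configuration hypotheses of Lemma \ref{lem:CG1} or \ref{lem:CG2} are met'' is unsupported, and your closing remark that the count $4>3$ makes the collision unavoidable is backwards — the collision is not forced for any $n\ge 3$.

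Ruling out this residual configuration is the actual content of the proposition. The paper does it in two stages. For $\mu(\Xset)=4$ with four maximal lines through four distinct nodes of $\ell$, it invokes Proposition \ref{prp:nmaximals} (whose proof uses the Carnicer--Gasca characterization of $GC_n$ sets with exactly $n$ maximal lines, Proposition \ref{prp:nmax}) to manufacture a fifth maximal line, a contradiction; your ``one argues similarly by counting'' does not cover this, since four lines can hit four distinct nodes. For $\mu(\Xset)=3$ with three maximal lines through three distinct nodes of $\ell$, the paper runs a three-step argument: (a) $|\Xset_\ell|\ge 1$, proved by contradiction using the sets $\Xset\setminus M_i$, Corollary \ref{crl:CG}, and Proposition \ref{prp:n=3}(iii) to produce three lines through $A_4$ that become concurrent maximal lines in a $GC_1$ set; (b) $|\Xset_\ell|\ge 1$ forces $|\Xset_\ell|\ge 5$ via the ``Furthermore'' clause of Proposition \ref{prp:n=3} applied to $\Xset\setminus M_1$ and $\Xset\setminus M_2$; (c) $|\Xset_\ell|\ge 5$ forces $|\Nset_\ell|\le 6$, whence Theorem \ref{thm:Nell} and Theorem \ref{thm:2n+1} yield five collinear nodes of $\Nset_\ell$ and hence a fourth maximal line, again a contradiction. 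None of this machinery appears in your proposal, so the gap is not a presentational one: the case your pigeonhole was meant to dispatch is the entire difficulty, and it is also the case that genuinely occurs (and is not a contradiction) when $n=3$.
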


\noindent First we will prove the following

\begin{proposition}\label{prp:nmaximals}
Let $\Xset$ be a $GC_n$ set and $\ell$ be an $n$-node line, where $n \geq 1, \ n\neq 3.$ Suppose that there are $n$ maximal lines passing through $n$ distinct
nodes in $\ell.$ Then there is at least one more maximal line in $\Xset.$
\end{proposition}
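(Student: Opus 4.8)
The plan is to argue by contradiction: suppose $\Xset$ has \emph{exactly} $n$ maximal lines, call them $M_1,\dots,M_n$, and that these are precisely the lines through $n$ distinct nodes $B_1,\dots,B_n$ of $\ell$, with $B_i\in M_i\cap\ell$. Since $\ell$ is an $n$-node line, $B_1,\dots,B_n$ are all of $\Xset\cap\ell$. The strategy is to invoke Proposition \ref{prp:nmax} (Carnicer--Gasca): because $\Xset$ is a $GC_n$ set with exactly $n$ maximal lines, it admits the representation \eqref{01O} together with an outside node $O$ and three concurrent lines $L_1,L_2,L_3$ through $O$ carrying all the additional nodes $\Xset_1$, with no $L_i$ maximal. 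I would first locate $O$ and $\ell$ relative to this structure: show that $O\notin\ell$ (otherwise $\ell$ would be one of the finitely many lines through $O$ and one checks, as in Case 3 of Proposition \ref{prp:n=3}, that the only $n$-node lines through $O$ are forced to miss being concurrent with all $n$ maximal lines), and that $\ell$ is not maximal.

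Next I would count incidences. Each $B_i=M_i\cap\ell$ is either an intersection node $A_{jk}$ (lying on two maximal lines) or an additional node lying on exactly one maximal line. If some $B_i$ were an intersection node $M_i\cap M_j$, then $B_i\in M_j\cap\ell$ too, so $M_j$ would be one of our $n$ lines through a node of $\ell$ with $M_j\cap\ell=B_j$ — but $B_i\ne B_j$, contradiction. Hence every $B_i$ is an \emph{additional} node of $M_i$, i.e. $B_i\in\Xset_1$, so $B_i\in L_1\cup L_2\cup L_3$. Thus the $n$ distinct nodes $B_1,\dots,B_n$ of $\ell$ are distributed among the three lines $L_1,L_2,L_3$. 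Since $\ell\ne L_j$ (as $O\notin\ell$ but $O\in L_j$), each $L_j$ meets $\ell$ in at most one point, so $L_j$ contains at most one of the $B_i$. Therefore $n\le 3$. Combined with the hypothesis $n\ne 3$, this leaves only $n=1,2$; for those small cases one checks directly (a $1$-node or $2$-node line configuration with $n$ maximal lines through distinct nodes of $\ell$) that a further maximal line is forced, or rather that the hypothesis is vacuous/immediate — e.g. for $n=1$ a $1$-node line and one maximal line, and $\mu(\Xset)\le n+2=3$ with the $GC_1$ structure forcing more lines. This contradiction (for $n\ge 4$) completes the proof.

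The main obstacle I anticipate is the bookkeeping around $O$ and the line $\ell$: I need to rule out cleanly the possibility $O\in\ell$, and more subtly to ensure that the $n$ maximal lines hypothesized to pass through nodes of $\ell$ really coincide with the full list $M_1,\dots,M_n$ coming from Proposition \ref{prp:nmax} — a priori the hypothesis only says ``there are $n$ maximal lines through $n$ distinct nodes of $\ell$'', and under the contradiction hypothesis $\mu(\Xset)=n$ these must be \emph{all} the maximal lines, which is the step that makes Proposition \ref{prp:nmax} applicable in the form I want. The case $O\in\ell$ should be handled by noting that then $\ell$ passes through $O$ and $n$ nodes, but $\ell\ne L_1,L_2,L_3$ would force $\ell$ to meet each $L_j$ only at $O$, so all $n$ nodes of $\ell$ other than $O$ would be intersection nodes $A_{ij}$ — however a line through $O$ containing $n\ge 2$ of the $A_{ij}$'s, together with $O$, has $n+1$ nodes and is maximal, contradicting $\mu(\Xset)=n$ (it would be a new maximal line, actually proving the proposition directly in that sub-case). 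So in fact the $O\in\ell$ branch \emph{proves} the statement, and only the $O\notin\ell$ branch needs the counting argument above; I would present it in that order.

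\smallskip
Alternatively, if Proposition \ref{prp:nmax} turns out awkward to apply verbatim, a more self-contained route is: assume $\mu(\Xset)=n$ for contradiction, pick the outside node $O$ as in \eqref{01O}, and use Corollary \ref{properties}(ii) (no three maximal lines concurrent) to see the $n$ maximal lines $M_i$ are in ``general enough'' position; then the $\binom{n}{2}$ intersection nodes plus $2n$ additional nodes plus $O$ exhaust $\Xset$, and the additional nodes collapse onto three lines through $O$ by the $GC_n$ condition applied to $O$'s fundamental polynomial — which recovers exactly the Carnicer--Gasca picture and the bound $n\le 3$ as above.
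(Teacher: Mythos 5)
Your argument is correct and follows essentially the same route as the paper's: assume $\mu(\Xset)=n$, invoke Proposition \ref{prp:nmax}, show each of the $n$ nodes of $\ell$ must be an additional node and hence lie on $L_1\cup L_2\cup L_3$, and obtain $n\le 3$ by counting (the paper phrases this contrapositively, forcing $\ell=L_i$ and then contradicting Proposition \ref{prp:nmax}(ii)). The only remark is that the sub-case $O\in\ell$ you labor over is vacuous --- by hypothesis every one of the $n$ nodes of $\ell$ lies on a maximal line, while $O$ lies on none --- so the (slightly miscounted) argument you sketch for that branch is unnecessary.
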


\begin{proof}
Note that the cases $n=1,2$ are obvious, since then any $GC_n$ set possesses at least three maximal lines. Thus we may assume that $n\ge 4.$
  Suppose by way of contradiction that the node set $\Xset$ possesses exactly $n$ maximal lines denoted by $M_1,\ldots, M_n.$ Then the characterization of Proposition \ref{prp:nmax} with \eqref{01O} holds. Now notice that the line $\ell$ does not pass through an intersection node of two maximal lines. Indeed, in this case two maximal lines intersect $\ell$ at a node and there remain only $n-2$ maximal lines to intersect $\ell$ at other nodes. Thus clearly the condition of the Proposition cannot be satisfied. Thus the line $\ell$ may pass through only the additional nodes $A_i, A_i',\ i=1,\ldots,n,$ and the outside node $O$ (see \eqref{01O}). Thus, in view of Proposition \ref{prp:nmax}, the $n$ nodes of the line $\ell$ are lying in the three lines $L_1,L_2,L_3.$ Since $n\ge 4$ we deduce that $\ell$ coincides with a line $L_i, i=1,2,3.$ On the other hand each of these three lines intersects at most $n-1$ maximal lines at nodes of $\Xset,$ since otherwise it would become a maximal line. This contradiction proves the Proposition.
\end{proof}

\begin{remark} It is worth mentioning that Proposition \ref{prp:nmaximals} is not valid in the case $n=3.$ Indeed, the $GC_3$ set $\Xset^*$ and the $3$-node line $\ell^*$ (see Fig. \ref{pic9}) give us a counterexample for this.
\end{remark}

Now we are in a position to start

\begin{proof}[The proof of Proposition \ref{prp:n=4}.]
First notice that the converse implications in the assertions (i) and (ii)
follow from Lemmas \ref{lem:CG1} and \ref{lem:CG2}, respectively.

The proof of the direct implications we divide into cases depending on the number of maximal lines in $\Xset.$ Recall that according to the relation \eqref{3,n+2} the number of maximal
lines is not greater than $6$ and is not less than $3.$

Case 1. Assume that there are exactly $6$ maximal lines in $\Xset.$ Notice that this is the case of Chung-Yao lattice.
In this case there is no $4$-node line. Indeed, through any node there pass two maximal lines in the Chung-Yao lattice.
Suppose conversely that there is a $4$-node line. Then we would have $8=4\times 2$ maximal lines passing through the $4$ nodes of $\ell$, which is a contradiction.

Case 2. Assume that there are exactly $5$ maximal lines for $\Xset.$ Notice that this is the case of Carnicer-Gasca lattice.
The line $\ell$ is a $4$-node line. Therefore either  there is a maximal line $M$ such that $M \cap {\ell} \cap  \Xset =\emptyset$
or there are two maximal lines $M', M'',$ such that $M' \cap M'' \cap {\ell} \in \Xset.$ Therefore the result holds in this case since the converse implications in the assertions (i) and (ii) are valid.

Case 3. Assume that there are exactly $4$ maximal lines in $\Xset.$

Let us mention that if there is a maximal line $M$ such that $M \cap {\ell} \cap  \Xset =\emptyset,$ or there are two maximal lines $M', M'',$ such that $M' \cap M'' \cap {\ell} \in \Xset,$ then the statement of Theorem again follows from the converse implications in the assertions (i) and (ii).
Therefore we may suppose that the four maximal lines intersect $\ell$ in four distinct nodes.

Now, in view of the case $n=4$ of Proposition \ref{prp:nmaximals}, we conclude that there is a fifth maximal line for $\Xset,$ which contradicts our assumption.

Case 4. Assume that there are exactly $3$ maximal lines in $\Xset.$

Notice that, in view of the converse implications in the assertions (i) and (ii), we may assume that $3$ maximal lines intersect $\ell$ at $3$ distinct nodes, say first three: $A_1,A_2,A_3$ (see Fig.\ref{pic12}).

\begin{figure}[ht] 
\centering
\includegraphics[scale=0.2]{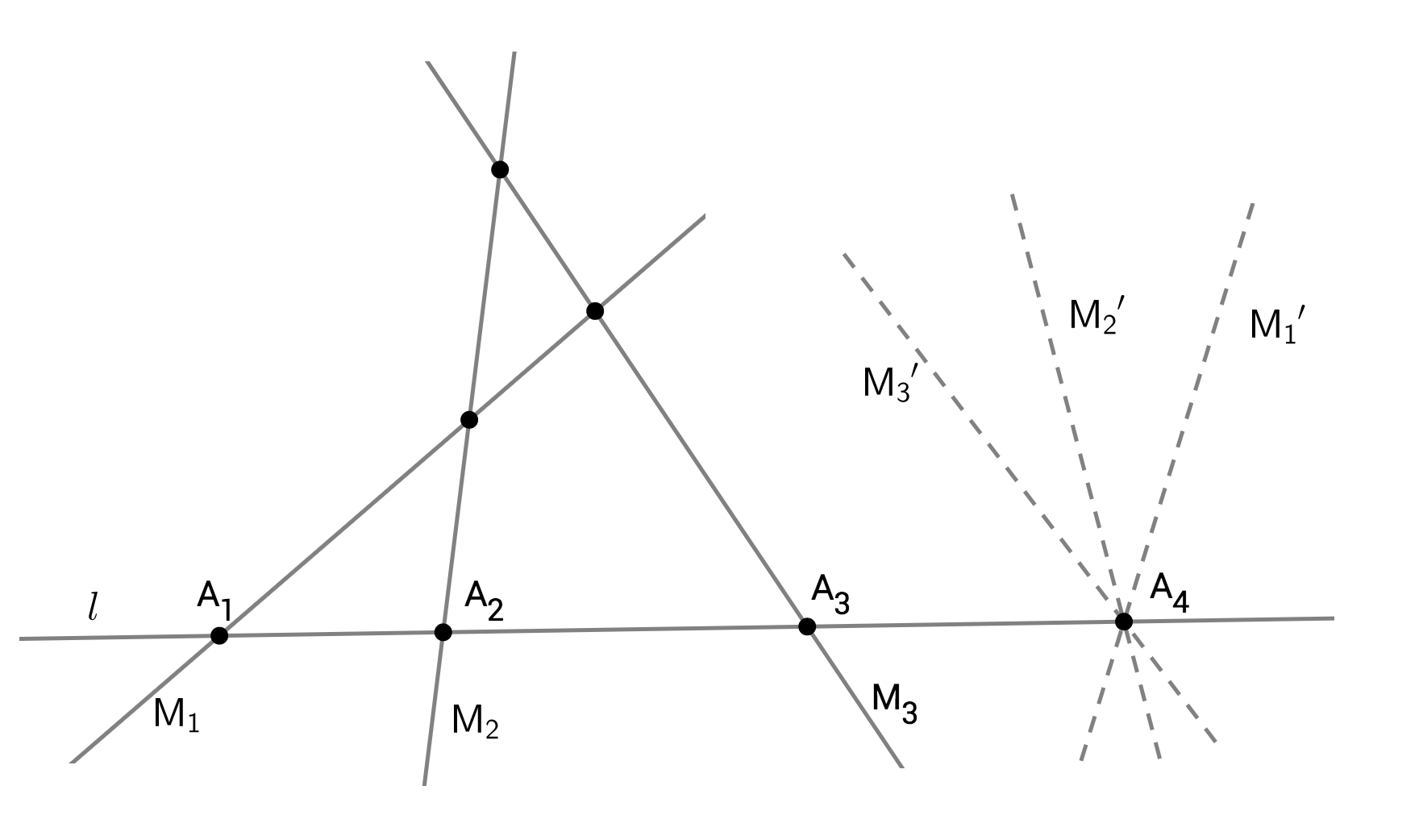}
\caption{Three maximal lines.}\label{pic12}
\end{figure}

Observe that we are to prove that this case is impossible, since the conditions in (i) or in
(ii) cannot be satisfied.

For this end it is enough to prove the following statements in this case.

(a) There is a node that uses the line $\ell,$ i.e $|\Xset_\ell|\ge 1$;

(b) If a node uses the line ${\ell}$, then there are at least $5$ nodes using it;

(c) If five nodes use the line ${\ell}$, then there is a forth maximal of the node set $\Xset.$

Obviously the statement (c) contradicts our assumption.

Now let us start with the statement (a). Assume, by way of contradiction, that $\ell$ is not used by any node of $\Xset.$ Let us consider the set $\Xset_{1} = \Xset\setminus M_{1}$. We have, in view of Corollary \ref{crl:CG}, that there are exactly $3$ maximal lines in $\Xset_1.$ Namely, $M_2, M_3,$ and a third maximal line $M_1',$  which clearly does not intersect $M_1$ at a node.  Indeed, otherwise, we would have $4$ maximal lines in the node set $\Xset.$   Now since $\ell$ is not used also in $\Xset_1$ we
obtain, in view of Proposition \ref{prp:n=3}, (iii), that in the node set $\Xset_{1}$ the third maximal line $M_{1}'$ passes through the fourth node $A_4$   of $\ell$ (see Fig.\ref{pic12}).  In a similar way the third maximal lines: $M_{2}',M_{3}'$ in the sets $\Xset_{2} = \Xset\setminus M_{2}$ and $\Xset_{3} = \Xset\setminus M_{3}$ pass through the node $A_4$ and do not intersect the maximal lines $M_2,M_3$ at nodes, respectively.

Next consider the $GC_1$ set $\Xset\setminus {(M_1\cup M_2\cup M_3)}.$  Here the lines $M_{1}',M_{2}'$ and $M_{3}'$ have each two nodes and thus are maximal, which contradicts Corollary \ref{properties} (ii).

Now let us prove the statement (b). Denote by $A$ the node that uses $\ell.$ Since the $3$ maximal lines are not concurrent we can choose a maximal line not passing through $A.$ Suppose, without loss of generality, that it is the line $M_1.$ Consider the $GC_3$ set $\Xset_1.$ As we mentioned above there are exactly $3$ maximal lines in $\Xset_1.$ Note that the node $A\in \Xset_1$ uses the line $\ell.$ On the other hand, by Proposition \ref{prp:n=3}, part "Furthermore",  the line $\ell$ can be used here either by $3$ or by no node in $\Xset_1.$ Thus we conclude that $(\Xset_1)_\ell$ consists of three noncollinear nodes. On the other hand,
we get from  Proposition \ref{prp:n=3}, (i), that there are two nodes in the maximal line
$M_2$ (as well as in $M_3$) that use the line $\ell.$ Next, there is a node in the node set $\Xset_2$ that uses the line $\ell,$ since the three nodes of $(\Xset_1)_\ell$ are noncollinear. Thus we may repeat discussion of the node set $\Xset_1$ with $\Xset_2$
and obtain that $|(\Xset_2)_\ell|=3.$ Now we have that
$$|\Xset_\ell| = |(\Xset_2)_\ell|+|(M_2\cap \Xset_\ell|\ge 3+2=5.$$
The first equality above follows from the relation \eqref{rep}.

Next let us prove the statement (c). Consider the set $\Nset_\ell.$ In view of the relation \eqref{xlnl} we get
$$|\Nset_\ell|\le 15-(4+5)=6.$$
By Theorem \ref{thm:Nell} we have that  the set $\Nset_\ell$ is $3$-dependent.
Now, Theorem \ref{thm:2n+1} implies that $5$ points from $\Nset_\ell$
are collinear, i.e., they are in a maximal line. This maximal line cannot coincide with the three maximal lines of $\Xset$, since each of them intersects $\ell$ at a node and hence has only $4$ nodes in the set $\Nset_\ell.$
Thus we get a fourth maximal line.

\end{proof}

\subsection{The proof of Theorem \ref{thm:corrected} for $n\ge 5$}

Let us mention that the proof here is similar to one from \cite{BH}, Section 3.4. But it is much shorter
due to the fact that the first step of the induction here is the case $n=4.$

Let us prove Theorem \ref{thm:corrected} by induction on $n.$
Assume that Theorem is true for all degrees less than $n$ and let us prove that it is true for the degree $n,$ where $n\ge 5.$

First assume that $|\Xset_{\ell}| \ge \binom{n-1}{2}+1.$ Then by Proposition \ref{prp:linennp} (ii), we get that
$|\Xset_{\ell}|=\binom{n}{2}$ and the direct implication in the assertion (i) holds.

Thus to prove Theorem \ref{thm:corrected} it suffices to assume that

\begin{equation} \label{a}
|\Xset_{\ell}| \le \binom{n-1}{2}
\end{equation}
and to prove that $|\Xset_{\ell}|=\binom{n-1}{2}$ and the direct implication in the assertion (ii) holds, i.e., there are two maximal lines $M', M'',$ such that $M' \cap M'' \cap {\ell} \in \Xset.$ Indeed, this will complete the proof in view of Lemma \ref{lem:CG2}.

Now let us show that there is a maximal line $M$ two nodes of which use the line ${\ell},$ i.e.,
\begin{equation} \label{bcd}
|M\cap \Xset_{\ell}|\ge 2.
\end{equation}
Indeed, we have at least three maximal lines, denoted by $M_1,M_2,M_3$ for the node set $\Xset.$ In view of Lemmas \ref{lem:CG1} and \ref{lem:CG2} we may suppose that they intersect the line $\ell$ at three distinct nodes. Now consider the $GC_{n-1}$ set $\Xset_1:=\Xset\setminus M_1.$
Here, the maximal lines $M_2,M_3,$  intersect $\ell$ at two distinct nodes. Therefore, in view of Corollary \ref{nor} and induction hypothesis, for one of them, denoted by $M,$ we have
$|M\cap (\Xset_1)_\ell| = (n-1)-1,$ or $(n-1)-2.$ Since $n\ge 5$ hence the inequality \eqref{bcd}
holds.

Now notice that, in view of \eqref{rep}, we have that
\begin{equation*}\label{repreprep}
|\Xset_{\ell}| = |(\Xset\setminus M)_{\ell}|+|M\cap \Xset_{\ell}|.
\end{equation*}
Hence, by making use of \eqref{bcd} and the induction hypothesis applied to the $GC_{n-1}$ set $\Xset\setminus M,$ we obtain that
\begin{equation}\label{fff}
 |\Xset_{\ell}| \ge |(\Xset\setminus M)_{\ell}|+2 \ge \binom{n-2}{2}+2.
\end{equation}
Therefore, in view of the condition \eqref{a} and Proposition \ref{prp:linennp} (iii), we conclude that
\begin{equation*} \label{b'}
|\Xset_{\ell}|=\binom{n-1}{2}\ \hbox{and}\ \Nset_{\ell} \subset \beta\in \Pi_2, \ |\Nset_{\ell}|=2n.
\end{equation*}
Let us use the induction hypothesis. By taking into account the first equality above and \eqref{bcd}, we deduce that
$$|(\Xset\setminus M)_{\ell}| =\binom{n-2}{2}.$$
Then we get that $2(n-1)$ nodes in $\Nset_{\ell}\cap (\Xset\setminus M)$ are located in
two maximal lines denoted by $M'$ and $M'',$ which intersect at a node $A\in {\ell}.$
Since $n\ge 5$ each of these two maximal lines passes through $4$ nodes of $\Nset_{\ell}\subset \beta.$
Thus each of them divides $\beta$ and we get $\beta=M'M''.$ Finally, according to Proposition \ref{prp:linennp} (iii), each of these lines passes through exactly $n$ nodes of $\Xset\setminus \ell.$ Therefore, since $A\in M'\cap M'',$ we get that each of these lines is maximal also for the set $\Xset.$ Hence the direct implication in the assertion (ii) holds.

At the end let us present
\begin{corollary}\label{lastcrl}
Assume that Conjecture \ref{conj:GM} holds for all degrees up to
$n$. Let $\Xset$ be a $GC_n$ set with exactly three maximal lines, where $n \ge 4.$ Then there are exactly three $n$-node lines in $\Xset$ and each of them is used by exactly $\binom{n}{2}$ nodes from $\Xset.$
\end{corollary}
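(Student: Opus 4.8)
The plan is to deduce Corollary \ref{lastcrl} from the already-established Theorem \ref{thm:corrected} together with the structure theory of $GC_n$ sets with exactly three maximal lines. First I would fix the three maximal lines $M_1,M_2,M_3$; by Corollary \ref{properties} they form a triangle with vertices $A_{12},A_{13},A_{23}\in\Xset$, and the remaining $\binom{n+2}{2}-3$ nodes consist of $n-1$ additional nodes on each $M_i$ together with one outside node $O\notin M_1\cup M_2\cup M_3$. The key preliminary observation is that \emph{no $n$-node line $\ell$ can pass through a vertex $A_{ij}$}: the two maximal lines $M_i,M_j$ through that vertex meet $\ell$ only at $A_{ij}$, so the third maximal line $M_k$ is the only maximal line that could meet $\ell$ at a second node; hence $\ell$ cannot be of the type in Theorem \ref{thm:corrected}(ii) (that would need $M'\cap M''\cap\ell\in\Xset$ with $M',M''$ distinct maximals meeting $\ell$ at a node, impossible here), and by part (i) it would then be used by $\binom n2$ nodes — but we must still rule out that such a line exists at all. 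Actually the cleaner route: since $\Xset$ has exactly three maximal lines, Theorem \ref{thm:corrected}(ii) is vacuous (it requires two maximal lines crossing on $\ell$ at a node, which as just noted forces $\ell$ through a vertex, and one checks the count fails), so \emph{every} $n$-node line of $\Xset$ falls under case (i): it is used by exactly $\binom n2$ nodes and there is a maximal line $M_0$ with $M_0\cap\ell\cap\Xset=\emptyset$, while the other two maximal lines meet $\ell$ at nodes.

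Next I would count the $n$-node lines. Given the trichotomy just obtained, an $n$-node line $\ell$ is disjoint (node-wise) from exactly one maximal line $M_0$ and meets each of the other two, say $M_1$ and $M_2$, at a node of $\Xset$. I would show that for each choice of the "missed" maximal line $M_0=M_3$ there is exactly one such $n$-node line. For existence, I would invoke the explicit description from Case 3 of the proof of Proposition \ref{prp:n=3} — extended to general $n$ via the structure in \eqref{01O} and Proposition \ref{prp:nmax}: the additional nodes not on $M_3$ lie, together with $O$, on lines through $O$, and exactly one of these (call it $\ell_3$) is an $n$-node line avoiding $M_3$; alternatively, and more robustly, I would argue by induction on $n$ using Corollary \ref{crl:CG}: removing a maximal line $M$ from $\Xset$ gives a $GC_{n-1}$ set with exactly three maximal lines, and $n$-node lines of $\Xset$ missing $M$ correspond (via Lemma \ref{lem:CG1} and the relation $\Xset_\ell=\Xset\setminus(\ell\cup M_0)$) to $(n-1)$-node lines of $\Xset\setminus M$; the base case $n=4$ is handled by Proposition \ref{prp:n=4} and the base case $n=3$ by Proposition \ref{prp:n=3}. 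For uniqueness, I would use that $\ell$ is determined by the pair of nodes it picks out on $M_1$ and $M_2$ together with the constraint that it pass through $n$ nodes total; a dimension/counting argument (there are only $n-1+n-1+\text{(outside)}$ candidate nodes off $M_3$, and an $n$-node line off $M_3$ must collect $n$ collinear ones among them) pins it down — here I would lean on Proposition \ref{prp:linennp}(iii) and the fact that $\Nset_\ell$ must then lie on a conic, which over-constrains multiple choices.

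The last step is to assemble the count: there are exactly three maximal lines, hence exactly three choices of the missed maximal line $M_0$, hence exactly three $n$-node lines $\ell_1,\ell_2,\ell_3$ (one for each $i$, with $\ell_i$ avoiding $M_i$ at nodes), and by the trichotomy each is used by exactly $\binom n2$ nodes, namely $\Xset_{\ell_i}=\Xset\setminus(\ell_i\cup M_i)$, which is moreover a $GC_{n-2}$ set by Theorem \ref{thm:corrected}(i). I should double-check there is no $n$-node line passing through $O$ but not of the above form, and no $n$-node line lying entirely among vertices and additional nodes without meeting two maximals at nodes — both are excluded by the trichotomy (every $n$-node line is in case (i)) combined with the vertex observation.

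The main obstacle I anticipate is the \emph{uniqueness} half of the counting step: showing that for each missed maximal line $M_0$ there is at most one $n$-node line. The existence side follows fairly directly from the structural description (\eqref{01O}, Proposition \ref{prp:nmax}) or from the induction via Corollary \ref{crl:CG}, but to prove uniqueness I expect to need a genuine argument — most likely the following: if $\ell$ and $\ell'$ were two distinct $n$-node lines both missing $M_0$, then by Theorem \ref{thm:corrected}(i) both satisfy $\Xset_{\ell}=\Xset\setminus(\ell\cup M_0)$ and $\Xset_{\ell'}=\Xset\setminus(\ell'\cup M_0)$, so $\Xset\setminus(\ell\cup M_0)$ and $\Xset\setminus(\ell'\cup M_0)$ are both $GC_{n-2}$ sets sitting inside $\Xset\setminus M_0$ (a $GC_{n-1}$ set); comparing these — in particular noting that $\Nset_\ell$ and $\Nset_{\ell'}$ are disjoint $2n$-node subsets of $\Xset\setminus(M_0\cup\ell\cup\ell')$, which has too few nodes — yields a contradiction on cardinalities. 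Making that cardinality bookkeeping precise is where the real work lies.
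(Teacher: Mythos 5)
Your high-level skeleton (rule out case (ii) of Theorem \ref{thm:corrected}, then count the case-(i) lines) coincides with the paper's, but both halves of your argument have genuine gaps.

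The first gap is the key step: showing that no $n$-node line passes through a vertex. Your logic here is inverted. If $\ell$ passes through $A_{ij}=M_i\cap M_j$, then $M_i\cap M_j\cap\ell\in\Xset$, which is \emph{precisely} the hypothesis of case (ii); such a line is of type (ii), not excluded from it. Your fallback, that ``one checks the count fails,'' only disposes of the sub-case where $\ell$ additionally misses the third maximal line $M_k$ at nodes (then the hypotheses of (i) and (ii) hold simultaneously and $\binom{n}{2}\neq\binom{n-1}{2}$). In the remaining sub-case, where $\ell$ passes through the vertex \emph{and} meets $M_k$ at a node, nothing in Theorem \ref{thm:corrected} is contradictory: Lemma \ref{lem:CG2} consistently gives $|\Xset_\ell|=\binom{n-1}{2}$. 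The paper closes this sub-case with a separate, essential argument: first, $\ell$ must meet $M_k$ at a node, since otherwise $M_i$, $M_j$ and $\ell$ would be three concurrent maximal lines of the $GC_{n-1}$ set $\Xset\setminus M_k$; then, introducing the third maximal lines $M_1',M_2',M_3'$ of the sets $\Xset\setminus M_1,\Xset\setminus M_2,\Xset\setminus M_3$ (which exist by Corollary \ref{crl:CG} and miss $M_1,M_2,M_3$ at nodes respectively), one checks that $M_1',M_2',M_3'$ and $\ell$ are four distinct maximal lines of the $GC_{n-3}$ set $\Xset\setminus(M_1\cup M_2\cup M_3)$, contradicting Corollary \ref{crl:CG}. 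No argument of this kind appears in your proposal, and without it the corollary is not proved.

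The second gap is in the counting. Your existence route via Proposition \ref{prp:nmax} and \eqref{01O} is unavailable: that structure describes $GC_n$ sets with exactly $n$ maximal lines, whereas here there are exactly three and $n\ge4$. Your uniqueness sketch is also off: for a line $\ell$ in case (i) one has $\Xset_\ell=\Xset\setminus(\ell\cup M_0)$, hence $\Nset_\ell=M_0\cap\Xset$ consists of $n+1$ nodes (the $2n$-node conic description you invoke belongs to case (iii) of Proposition \ref{prp:linennp}, i.e.\ to lines used by $\binom{n-1}{2}$ nodes), and two $n$-node lines missing the same $M_0$ would have the \emph{same} set $\Nset$, not disjoint ones, so the cardinality bookkeeping you anticipate does not get off the ground. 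The paper's count is immediate once case (ii) is excluded: a case-(i) line $\ell$ misses some $M_i$ at nodes, hence has $n$ nodes in the $GC_{n-1}$ set $\Xset\setminus M_i$ and is maximal there; by Corollary \ref{crl:CG} that set has exactly three maximal lines $M_j,M_k,M_i'$, so $\ell=M_i'$. Thus $M_1',M_2',M_3'$ are exactly the $n$-node lines of $\Xset$, each used by $\binom{n}{2}$ nodes. Replacing your counting step by this identification both supplies existence and settles uniqueness at a stroke.
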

\begin{proof}
Suppose that $M_1,M_2$ and $M_3$ are the three maximal lines of $\Xset.$ Let us call the intersection nodes $$A:=M_1\cap M_2,\ B:=M_2\cap M_3,\ C:=M_3\cap M_1$$
vertices. Let $\ell$ be any $n$-node line.
We are to prove that the case (ii) of Theorem \ref{thm:corrected}, i.e., $|\Xset_\ell|=\binom{n-1}{2},$ cannot happen.
For this purpose, in view of the mentioned case (ii), it suffices to show that there is no $n$-node line passing through a vertex. Assume by way of contradiction that $\ell$ is an $n$-node line passing through a vertex, say $A.$ First observe that $\ell$ intersects also $M_3$ at a node.  Indeed, otherwise in the $GC_{n-1}$ set $\Xset_3:=\Xset\setminus M_3$ the maximal lines $M_1,M_2$ and $\ell$ are concurrent at $A.$

Now let us consider the set $\Xset_{1} = \Xset\setminus M_{1}.$ We have, in view of Corollary \ref{crl:CG}, that there are exactly $3$ maximal lines in the node set $\Xset_1.$ Namely, $M_2,M_3,$ and a third maximal line denoted by $M_1'.$ Of course $M_1'$ intersects $M_2$ and $M_3$ at nodes different from vertices. Also $M_1'$ does not intersect $M_1$ at a node.  Indeed, otherwise, $M_1'$ would be the fourth maximal line in the node set $\Xset.$ 

 In a similar way the third maximal lines $M_{2}'$ and $M_{3}'$ in the sets $\Xset_{2} = \Xset\setminus M_{2}$ and $\Xset_{3} = \Xset\setminus M_{3}$ do not intersect the maximal lines $M_2$ and $M_3$ at nodes, respectively. Also $M_2'$ intersects $M_1$ and $M_3$ at nodes different from the vertices and $M_3'$ intersects $M_1$ and $M_2$ at nodes different from the vertices. From these intersection properties we readily conclude that the lines $M_1',M_2', M_3'$ are distinct $n$-node lines in $\Xset.$

We have also that the lines $M_1',M_2', M_3'$ are different from $\ell.$ Indeed, only $\ell$ from these lines passes through a vertex.

Next consider the $GC_{n-3}$ set $\Xset\setminus {(M_1\cup M_2\cup M_3)}.$  Observe that here we have four maximal lines: $M_{1}',M_{2}', M_{3}',$ and $\ell_0,$ which contradicts Corollary \ref{crl:CG}.

Finally, assume that $\ell$ is any $n$-node line in $\Xset.$  We have shown already that $|\Xset_\ell|=\binom{n}{2}.$ Therefore, in view of the case (i) of Theorem \ref{thm:corrected}, we obtain that for a maximal line $M_i,\ i=1,2,3,$ of $\Xset,$ we have that $M_i \cap {\ell} \cap  \Xset =\emptyset.$ Thus the line $\ell$ is a maximal line in the node set $\Xset_i=\Xset\setminus M_i$ and clearly it coincides with the $n$-node line $M_i'$ there. Thus  the lines $M_1',M_2', M_3',$ are the only $n$-node lines in $\Xset.$
\end{proof}

\end{document}